\renewcommand\eqref[1]{(\ref{#1})}
\newcommand*{\mint}[1]{%
  \mint@l{#1}{}%
}
\newcommand*{\mint@l}[2]{%
  \@ifnextchar\limits{%
    \mint@l{#1}%
  }{%
    \@ifnextchar\nolimits{%
      \mint@l{#1}%
    }{%
      \@ifnextchar\displaylimits{%
        \mint@l{#1}%
      }{%
        \mint@s{#2}{#1}%
      }%
    }%
  }%
}
\newcommand*{\mint@s}[2]{%
  \@ifnextchar_{%
    \mint@sub{#1}{#2}%
  }{%
    \@ifnextchar^{%
      \mint@sup{#1}{#2}%
    }{%
      \mint@{#1}{#2}{}{}%
    }%
  }%
}
\def\mint@sub#1#2_#3{%
  \@ifnextchar^{%
    \mint@sub@sup{#1}{#2}{#3}%
  }{%
    \mint@{#1}{#2}{#3}{}%
  }%
}
\def\mint@sup#1#2^#3{%
  \@ifnextchar_{%
    \mint@sup@sub{#1}{#2}{#3}%
  }{%
    \mint@{#1}{#2}{}{#3}%
  }%
}
\def\mint@sub@sup#1#2#3^#4{%
  \mint@{#1}{#2}{#3}{#4}%
}
\def\mint@sup@sub#1#2#3_#4{%
  \mint@{#1}{#2}{#4}{#3}%
}
\newcommand*{\mint@}[4]{%
  \mathop{}%
  \mkern-\thinmuskip
  \mathchoice{%
    \mint@@{#1}{#2}{#3}{#4}%
        \displaystyle\textstyle\scriptstyle
  }{%
    \mint@@{#1}{#2}{#3}{#4}%
        \textstyle\scriptstyle\scriptstyle
  }{%
    \mint@@{#1}{#2}{#3}{#4}%
        \scriptstyle\scriptscriptstyle\scriptscriptstyle
  }{%
    \mint@@{#1}{#2}{#3}{#4}%
        \scriptscriptstyle\scriptscriptstyle\scriptscriptstyle
  }%
  \mkern-\thinmuskip
  \int#1%
  \ifx\\#3\\\else_{#3}\fi
  \ifx\\#4\\\else^{#4}\fi
}
\newcommand*{\mint@@}[7]{%
  \begingroup
    \sbox0{$#5\int\m@th$}%
    \sbox2{$#5\int_{}\m@th$}%
    \dimen2=\wd0 %
    \let\mint@limits=#1\relax
    \ifx\mint@limits\relax
      \sbox4{$#5\int_{\kern1sp}^{\kern1sp}\m@th$}%
      \ifdim\wd4>\wd2 %
        \let\mint@limits=\nolimits
      \else
        \let\mint@limits=\limits
      \fi
    \fi
    \ifx\mint@limits\displaylimits
      \ifx#5\displaystyle
        \let\mint@limits=\limits
      \fi
    \fi
    \ifx\mint@limits\limits
      \sbox0{$#7#3\m@th$}%
      \sbox2{$#7#4\m@th$}%
      \ifdim\wd0>\dimen2 %
        \dimen2=\wd0 %
      \fi
      \ifdim\wd2>\dimen2 %
        \dimen2=\wd2 %
      \fi
    \fi
    \rlap{%
      $#5%
        \vcenter{%
          \hbox to\dimen2{%
            \hss
            $#6{#2}\m@th$%
            \hss
          }%
        }%
      $%
    }%
  \endgroup
}
\numberwithin{equation}{section}
\theoremstyle{plain}
\newtheorem{thm}{Theorem}[section]
\newtheorem{cor}[thm]{Corollary}
\theoremstyle{definition}
\newtheorem{rem}[thm]{Remark}
\def\S{\mathfrak{S}}
\def\H{\mathbb{H}^{n}}
\def\X{\mathbb{X}}
\begin{document}

 \title[Reverse integral Hardy inequality on metric measure spaces]
{Reverse integral Hardy inequality on metric measure spaces}

\author{Aidyn Kassymov}
\address{
  Aidyn Kassymov:
  \endgraf
   \endgraf
  Department of Mathematics: Analysis, Logic and Discrete Mathematics
  \endgraf
  Ghent University, Belgium
  \endgraf
  and
  \endgraf
  Institute of Mathematics and Mathematical Modeling
  \endgraf
  125 Pushkin str.
  \endgraf
  050010 Almaty
  \endgraf
  Kazakhstan
  \endgraf
  and
  \endgraf
  Al-Farabi Kazakh National University
  \endgraf
   71 Al-Farabi avenue
   \endgraf
   050040 Almaty
   \endgraf
   Kazakhstan
  \endgraf
	{\it E-mail address} {\rm aidyn.kassymov@ugent.be} and {\rm kassymov@math.kz}}

  \author[M. Ruzhansky]{Michael Ruzhansky}
\address{
	Michael Ruzhansky:
	 \endgraf
  Department of Mathematics: Analysis, Logic and Discrete Mathematics
  \endgraf
  Ghent University, Belgium
  \endgraf
  and
  \endgraf
  School of Mathematical Sciences
    \endgraf
    Queen Mary University of London
  \endgraf
  United Kingdom
	\endgraf
  {\it E-mail address} {\rm michael.ruzhansky@ugent.be}}

\author[D. Suragan]{Durvudkhan Suragan}
\address{
	Durvudkhan Suragan:
	\endgraf
	Department of Mathematics
	\endgraf
	School of Science and Technology, Nazarbayev University
    \endgraf
	53 Kabanbay Batyr Ave, Nur-Sultan 010000
	\endgraf
	Kazakhstan
	\endgraf
	{\it E-mail address} {\rm durvudkhan.suragan@nu.edu.kz}}

\thanks{
The authors were supported in parts by the FWO Odysseus Project, the Leverhulme Grant RPG-2017-151 and by EPSRC Grant
EP/R003025/1, as well as NU CRG
091019CRP2120 and the NU FDCRG 240919FD3901.
\
}

     \keywords{Reverse integral Hardy inequality, Reverse Minkowski inequality, metric measure space,
     	 homogeneous Lie group, hyperbolic space, Cartan-Hadamard manifolds.}
 \subjclass{22E30, 43A80.}

     \begin{abstract}
In this note, we obtain a reverse version of the integral Hardy inequality on metric measure spaces. Moreover, we give necessary and sufficient conditions for the weighted reverse Hardy inequality to be true. The main tool in our proof is a continuous version of the reverse Minkowski inequality. Also, we present some consequences of the obtained reverse Hardy inequality on the homogeneous groups, hyperbolic spaces and Cartan-Hadamard manifolds.
     \end{abstract}
     \maketitle

\section{Introduction}
In one of the pioneering papers of Hardy \cite{Har20}, he proved the following (direct) inequality:
\begin{equation}
\int_{a}^{\infty}\frac{1}{x^{p}}\left(\int_{a}^{\infty}f(t)dt\right)^{p}dx\leq\left(\frac{p}{p-1}\right)^{p}\int_{a}^{\infty}f^{p}(x)dx,
\end{equation}
where $f\geq0$, $p>1$, and $a>0$. Today's literature on the development of the extensions of this integral Hardy inequality is very large, see e.g. \cite{Dav99,EE04, KP03, KPS17} and \cite{OK90}. Note that the multi-dimensional version of the integral Hardy inequality was proved in  \cite{DHK}.

In \cite{BH}, the authors obtained the so-called reverse integral Hardy inequality in the following form:
\begin{equation}
\left(\int^{b}_{a}\left(\int_{a}^{x}f(t)dt\right)^{q}u(x)dx\right)^{\frac{1}{q}}\geq C\left(\int^{b}_{a}f^{p}(x)v(x) dx\right)^{\frac{1}{p}}, \end{equation}
and the conjugate reverse integral Hardy inequality
\begin{equation}
\left(\int^{b}_{a}\left(\int_{x}^{b}f(t)dt\right)^{q}u(x)dx\right)^{\frac{1}{q}}\geq C\left(\int^{b}_{a}f^{p}(x)v(x) dx\right)^{\frac{1}{p}}, \end{equation}
where $f\geq0,$ for  some positive weights $u,v$ and $p,q<0$. 
 The reverse Hardy inequalities were also studied in \cite{GKK,KKK08,KK} and \cite{Pro}.

 The main aim of the present paper is to extend the reverse Hardy inequalities to general metric measure spaces. More specifically, we consider metric spaces $\mathbb X$ with a Borel measure $dx$ allowing for the following {\em polar decomposition} at $a\in{\mathbb X}$: we assume that there is a locally integrable function $\lambda \in L^1_{loc}$  such that for all $f\in L^1(\mathbb X)$ we have
   \begin{equation}\label{EQ:polarintro}
   \int_{\mathbb X}f(x)dx= \int_0^{\infty}\int_{\Sigma_{r}} f(r,\omega) \lambda(r,\omega) d\omega dr,
   \end{equation}
    for the set $\Sigma_{r}=\{x\in\mathbb{X}:d(x,a)=r\}\subset \mathbb X$ with a measure on it denoted by $d\omega$, and $(r,\omega)\rightarrow a $ as $r\rightarrow0$.

The condition \eqref{EQ:polarintro} is rather general since we allow the function $\lambda$ to depend on the whole variable $x=(r,\omega)$. The reason to assume \eqref{EQ:polarintro} is that since $\mathbb X$ does not have to have a differentiable structure, the function $\lambda(r,\omega)$ can not be in general obtained as the Jacobian of the polar change of coordinates. However, if such a differentiable structure exists on $\mathbb X$, the condition \eqref{EQ:polarintro} can be obtained as the standard polar decomposition formula.
In particular, let us give several examples of $\mathbb X$ for which the condition \eqref{EQ:polarintro} is satisfied with different expressions for $ \lambda (r,\omega)$:

\begin{itemize}
\item[(I)] Euclidean space $\mathbb{R}^{n}$: $ \lambda (r,\omega)= {r}^{n-1}.$
\item[(II)] Homogeneous groups: $ \lambda (r,\omega)= {r}^{Q-1}$, where $Q$ is the homogeneous dimension of the group. Such groups have been consistently developed by Folland and Stein \cite{FS1}, see also an up-to-date exposition in \cite{FR}.
\item[(III)] Hyperbolic spaces $\mathbb H^n$:  $\lambda(r,\omega)=(\sinh {r})^{n-1}$.
\item[(IV)] Cartan-Hadamard manifolds: Let $K_M$ be the sectional curvature on $(M, g).$ A Riemannian manifold $(M, g)$ is called {\em a Cartan-Hadamard manifold} if it is complete, simply connected and has non-positive sectional curvature, i.e., the sectional curvature $K_M\le 0$ along each plane section at each point of $M$. Let us fix a point $a\in M$ and denote by
$\rho(x)=d(x,a)$ the geodesic distance from $x$ to $a$ on $M$. The exponential map ${\rm exp}_a :T_a M \to  M$ is a diffeomorphism, see e.g. Helgason \cite{DV3}.  Let $J(\rho,\omega)$ be the density function on $M$, see e.g. \cite{DV1}. Then we have the following polar decomposition:
$$
\int_M f(x) dx=\int_0^{\infty}\int_{\mathbb S^{n-1}}f({\rm exp}_{a}(\rho \omega))J(\rho,\omega) \rho^{n-1}d\rho d\omega,
$$
so that we have \eqref{EQ:polarintro} with $\lambda(\rho,\omega)= J(\rho,\omega) \rho^{n-1}.$
\end{itemize}
 In \cite{RV}, the (direct) integral Hardy inequality on metric measure space was established with applications to homogeneous Lie groups, hyperbolic spaces, Cartan-Hadamard manifolds with negative curvature and on general Lie groups with Riemannian distance. Also, on Riemannian manifolds Hardy inequality was obtained in \cite{RYriem}.  In the present paper, we continue the analysis in general setting of metric measure spaces as in \cite{RV} and show the reverse integral Hardy inequality with $q<0$ and $p\in(0,1)$ and we also discuss its consequences for homogeneous Lie groups, hyperbolic spaces and Cartan-Hadamard manifolds with negative curvature.
\section{Main result}
Let us recall briefly the reverse H\"{o}lder's inequality.
\begin{thm}[\cite{AF}, Theorem 2.12, p. 27]\label{Hol}
Let $p\in(0,1)$, so that $p'=\frac{p}{p-1}<0$. If non-negative functions satisfy $f\in L^{p}(\X)$ and $0<\int_{\X}g^{p'}(x)dx<+\infty,$ we have
\begin{equation}\label{Holin}
\int_{\X}f(x)g(x)dx\geq\left(\int_{\X}f^{p}(x)dx\right)^{\frac{1}{p}}\left(\int_{\X}g^{p'}(x)dx\right)^{\frac{1}{p'}}.
\end{equation}
\end{thm}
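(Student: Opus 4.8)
The plan is to derive this reverse inequality from the \emph{ordinary} Hölder inequality by a change of exponents, exploiting the fact that although $p\in(0,1)$ is fractional, the number $P:=1/p$ is a genuine exponent larger than $1$. First I would observe that the hypothesis $0<\int_{\X}g^{p'}(x)dx<+\infty$ forces $g>0$ almost everywhere: since $p'<0$, on any set where $g$ vanishes the integrand $g^{p'}$ equals $+\infty$, so finiteness of the integral rules out such a set up to measure zero. I would also dispose of the degenerate cases at the outset: if $\int_{\X}fg\,dx=+\infty$ the claimed lower bound is automatic, and if $\int_{\X}f^{p}(x)dx=0$ then $f=0$ a.e., whence both sides vanish; so I may assume all three integrals are finite and positive.

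For the main estimate, set $P=1/p>1$, whose conjugate exponent is $P'=\frac{P}{P-1}=\frac{1}{1-p}>1$. I would apply the standard Hölder inequality on $\X$ to the pair $F=(fg)^{p}$ and $G=g^{-p}$, whose product is $FG=f^{p}$:
$$\int_{\X}f^{p}(x)\,dx=\int_{\X}F(x)G(x)\,dx\leq\left(\int_{\X}F^{P}(x)\,dx\right)^{1/P}\left(\int_{\X}G^{P'}(x)\,dx\right)^{1/P'}.$$
The point of this choice is that $F^{P}=(fg)^{pP}=fg$ and $G^{P'}=g^{-pP'}=g^{-p/(1-p)}=g^{p'}$, the last identity being exactly $-\frac{p}{1-p}=\frac{p}{p-1}=p'$. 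Thus the right-hand side collapses to $\left(\int_{\X}fg\,dx\right)^{p}\left(\int_{\X}g^{p'}\,dx\right)^{1-p}$.

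It then remains to rearrange. Raising the resulting inequality $\int_{\X}f^{p}\,dx\leq\left(\int_{\X}fg\,dx\right)^{p}\left(\int_{\X}g^{p'}\,dx\right)^{1-p}$ to the positive power $1/p$ preserves its direction and gives $\left(\int_{\X}f^{p}\,dx\right)^{1/p}\leq\left(\int_{\X}fg\,dx\right)\left(\int_{\X}g^{p'}\,dx\right)^{(1-p)/p}$. Since $\frac{1-p}{p}=-\frac{1}{p'}$, and since $\int_{\X}g^{p'}\,dx$ is a finite positive number so that $\left(\int_{\X}g^{p'}\,dx\right)^{1/p'}>0$, multiplying through by $\left(\int_{\X}g^{p'}\,dx\right)^{1/p'}$ yields precisely \eqref{Holin}.

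The only real care needed is bookkeeping with the negative exponent $p'$: one must check that $G=g^{-p}$ and $g^{p'}$ are well defined (guaranteed by $g>0$ a.e.), that raising to the power $1/p>0$ keeps the inequality in the same direction while the subsequent multiplication by a positive constant does too, and that no division by $0$ or $\infty$ occurs, each of which is secured by the standing assumption $0<\int_{\X}g^{p'}\,dx<+\infty$. There is no analytic obstacle beyond this sign tracking; the substantive content is entirely the reduction to ordinary Hölder via the conjugate pair $\left(\tfrac{1}{p},\,\tfrac{1}{1-p}\right)$.
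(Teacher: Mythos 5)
Your proof is correct, and there is nothing in the paper to compare it against: the paper states this theorem without proof, quoting it directly from the cited reference \cite{AF} (Theorem 2.12, p.~27). Your argument is in fact the standard one given there — reduce to the ordinary H\"older inequality with the conjugate pair $P=\tfrac{1}{p}$, $P'=\tfrac{1}{1-p}$ applied to $F=(fg)^{p}$ and $G=g^{-p}$, so that $FG=f^{p}$, $F^{P}=fg$, $G^{P'}=g^{p'}$ — and your bookkeeping (a.e.\ positivity of $g$ from $\int_{\X}g^{p'}dx<\infty$, the degenerate cases $\int_{\X}fg\,dx=+\infty$ and $\int_{\X}f^{p}dx=0$, and the sign of each exponent when rearranging) is exactly the care the argument requires.
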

Let us present the reverse integral Minkowski inequality (or a continuous version of reverse Minkowski inequality).
\begin{thm}
Let $\mathbb X, \mathbb Y$ be metric measure spaces and let $F=F(x,y)\in \mathbb X\times \mathbb Y$ be a non-negative measurable function. Then we have
\begin{equation}\label{minkow}
\left[\int_{\mathbb X}\left(\int_{\mathbb Y}F(x,y)dy\right)^{q}dx\right]^{\frac{1}{q}}\geq\int_{\mathbb Y}\left(\int_{\mathbb X}F^{q}(x,y)dx\right)^{\frac{1}{q}}dy,\; q<0.
\end{equation}
\end{thm}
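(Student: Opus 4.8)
The plan is to run the classical proof of Minkowski's integral inequality but with the reverse Hölder inequality \eqref{Holin} replacing the usual Hölder inequality, so that every inequality sign is reversed. Abbreviate the inner integral by $G(x):=\int_{\mathbb Y}F(x,y)\,dy$, so the left-hand side of \eqref{minkow} is $\left(\int_{\mathbb X}G(x)^{q}\,dx\right)^{1/q}$. The starting point is the trivial factorization $G^{q}=G^{q-1}G$, into which I substitute the definition of $G$ and then swap the order of integration by Tonelli's theorem (legitimate since the integrand $G^{q-1}F$ is non-negative):
\[
\int_{\mathbb X}G(x)^{q}\,dx=\int_{\mathbb X}G(x)^{q-1}\!\left(\int_{\mathbb Y}F(x,y)\,dy\right)dx=\int_{\mathbb Y}\left(\int_{\mathbb X}G(x)^{q-1}F(x,y)\,dx\right)dy.
\]

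The key step is to bound the inner $x$-integral from below via Theorem~\ref{Hol}. Here I choose the conjugate pair $p'=q<0$ and $p=\frac{q}{q-1}$; a one-line check shows $p\in(0,1)$, since $q$ and $q-1$ are both negative and $|q|<|q-1|=1+|q|$, and indeed $\tfrac1p+\tfrac1{p'}=\tfrac{q-1}{q}+\tfrac1q=1$. Applying \eqref{Holin} on $\mathbb X$ with $f=G^{q-1}$ and $g=F(\cdot,y)$, and observing that $f^{p}=G^{(q-1)p}=G^{q}$ while $g^{p'}=F^{q}$, I get for each fixed $y$
\[
\int_{\mathbb X}G(x)^{q-1}F(x,y)\,dx\ \ge\ \left(\int_{\mathbb X}G(x)^{q}\,dx\right)^{1/p}\left(\int_{\mathbb X}F(x,y)^{q}\,dx\right)^{1/q}.
\]
Integrating this over $y\in\mathbb Y$ and pulling out the $y$-independent factor, the computation of the previous paragraph turns the left-hand side back into $\int_{\mathbb X}G^{q}\,dx$, yielding
\[
\int_{\mathbb X}G^{q}\,dx\ \ge\ \left(\int_{\mathbb X}G^{q}\,dx\right)^{1/p}\int_{\mathbb Y}\left(\int_{\mathbb X}F(x,y)^{q}\,dx\right)^{1/q}dy.
\]
Dividing through by $\left(\int_{\mathbb X}G^{q}\,dx\right)^{1/p}$ and using $1-\tfrac1p=1-\tfrac{q-1}{q}=\tfrac1q$ produces exactly \eqref{minkow}.

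I expect the main obstacle to be not the algebra but the bookkeeping of the finiteness and degeneracy conditions, which with $q<0$ are genuinely delicate because the relevant conventions read $\infty^{1/q}=0$ and $0^{1/q}=\infty$. The clean argument above presumes the hypotheses of Theorem~\ref{Hol}, namely $\int_{\mathbb X}G^{q}\,dx<\infty$ (so that $f=G^{q-1}\in L^{p}$) and $0<\int_{\mathbb X}F(\cdot,y)^{q}\,dx<\infty$, and the division step additionally needs $\int_{\mathbb X}G^{q}\,dx>0$; so the core estimate is established first on the set where all these integrals are finite and strictly positive. The remaining cases then have to be dispatched separately: when $\int_{\mathbb X}G^{q}\,dx=\infty$ the left-hand side of \eqref{minkow} is $0$, and one argues that the smallness of $G$ forcing this divergence also forces $\int_{\mathbb X}F(\cdot,y)^{q}\,dx=\infty$ on the relevant $y$, collapsing the right-hand side to $0$ as well; the case where the right-hand integrand vanishes identically is immediate. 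Making these limiting arguments airtight (and, if desired, replacing the formal division by a truncation/approximation argument) is the only point that requires real care.
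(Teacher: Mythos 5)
Your proof is correct and takes essentially the same route as the paper's: both factor the inner integral as $G^{q}=G^{q-1}G$ (the paper calls it $A$), swap the order of integration by Tonelli, apply the reverse H\"{o}lder inequality with the conjugate pair $p=\frac{q}{q-1}\in(0,1)$, $p'=q$, and then divide by $\left(\int_{\mathbb X}G^{q}\,dx\right)^{1/p}$. Your closing remarks on the degenerate cases (where $\int_{\mathbb X}G^{q}\,dx$ is $0$ or $\infty$, or $\int_{\mathbb X}F(\cdot,y)^{q}\,dx$ fails the hypotheses of Theorem \ref{Hol}) are in fact more careful than the paper, which performs the division without comment.
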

\begin{proof}
Let us consider the following function:
\begin{equation}
A(x):=\int_{\mathbb Y}F(x,y)dy,
\end{equation}
so we have
\begin{equation}
A^{q}(x)=\left(\int_{\mathbb Y}F(x,y)dy\right)^{q}.
\end{equation}
By integrating over $\mathbb X$ both sides and by using reverse H\"{o}lder's inequality (Theorem \ref{Hol}), we obtain
\begin{equation}
\begin{split}
\int_{\mathbb X}A^{q}(x)dx&=\int_{\mathbb X}A^{q-1}(x)A(x)dx\\&
=\int_{\mathbb X}A^{q-1}(x)\int_{\mathbb Y}F(x,y)dydx\\&
=\int_{\mathbb Y}\int_{\mathbb X}A^{q-1}(x)F(x,y)dxdy\\&
\stackrel{\eqref{Holin}}\geq \int_{\mathbb Y}\left(\int_{\mathbb X}A^{q-1\frac{q}{q-1}}(x)dx\right)^{\frac{q-1}{q}}\left(\int_{\mathbb X}F^{q}(x,y)dx\right)^{\frac{1}{q}}dy\\&
= \left(\int_{\mathbb X}A^{q}(x)dx\right)^{\frac{q-1}{q}}\int_{\mathbb Y}\left(\int_{\mathbb X}F^{q}(x,y)dx\right)^{\frac{1}{q}}dy.
\end{split}
\end{equation}
From this,  we get
\begin{equation}
\left[\int_{\mathbb X}\left(\int_{\mathbb Y}F(x,y)dy\right)^{q}dx\right]^{\frac{1}{q}}\geq\int_{\mathbb Y}\left(\int_{\mathbb X}F^{q}(x,y)dx\right)^{\frac{1}{q}}dy,
\end{equation}
proving \eqref{minkow}.
\end{proof}
\begin{rem}
In our sense, the negative exponent $q<0$ of  $0$, we understand in the following form:
\begin{equation}
0^{q}=(+\infty)^{-q}=+\infty,\,\,\,\,\text{and}\,\,\,\,\,\,0^{-q}=(+\infty)^{q}=0.
\end{equation}
\end{rem}
We denote by $B(a, r)$ the ball in $\X$ with centre $a$ and radius $r$, i.e
$$B(a, r) := \{x \in \X : d(x,a) < r\},$$
where $d$ is the metric on $\X$. Once and for all we will fix some point $a \in \X$, and we
will write
\begin{equation}
|x|_{a}:= d(a, x).
\end{equation}
Now we prove the reverse integral Hardy inequality on a metric measure space.
\begin{thm}
Assume that $p\in(0,1)$ and $q<0$. Let $\mathbb X$ be a metric measure space with a polar decomposition at $a\in \X$. Suppose that $u,v>0$ are locally integrable functions on $\mathbb X$. Then the inequality
\begin{equation}\label{har1}
\left[\int_{\mathbb X}\left(\int_{B(a,|x|_{a})}f(y)dy\right)^{q}u(x)dx\right]^{\frac{1}{q}}\geq C(p,q)\left(\int_{\mathbb X}f^{p}(x)v(x)dx\right)^{\frac{1}{p}}
\end{equation}
holds for some $C(p,q)>0$ and for all non-negative real-valued measurable functions $f$, if and only if
\begin{equation}\label{D1}
0< D_{1}:=\inf_{x\neq a}\left[\left(\int_{\mathbb X\setminus B(a,|x|_{a})}u(y)dy\right)^{\frac{1}{q}}\left(\int_{B(a,|x|_{a})}v^{1-p'}(y)dy\right)^{\frac{1}{p'}}\right].
\end{equation}
Moreover, the biggest constant $C(p,q)$ in \eqref{har1} has the following relation to $D_{1}$:
\begin{equation}\label{C}
D_{1}\geq C(p,q)\geq \left(\frac{p'}{p'+q}\right)^{-\frac{1}{q}}\left(\frac{q}{p'+q}\right)^{-\frac{1}{p'}}D_{1}.
\end{equation}
\end{thm}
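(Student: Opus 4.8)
The plan is to establish the two implications of the "if and only if" together with the quantitative bounds \eqref{C} by reducing everything to the radial variable through the polar decomposition \eqref{EQ:polarintro} and then using the reverse Minkowski inequality \eqref{minkow} and the reverse H\"older inequality \eqref{Holin} as the two engines. It is convenient to abbreviate $W(r):=\int_{\mathbb{X}\setminus B(a,r)}u(y)\,dy$ and $V(r):=\int_{B(a,r)}v^{1-p'}(y)\,dy$, so that $D_1=\inf_{r>0}W(r)^{1/q}V(r)^{1/p'}$. The single most important thing to watch throughout is that $p\in(0,1)$, $p'<0$ and $q<0$, so that each of the maps $t\mapsto t^{p'},t^{q},t^{1/p'},t^{1/q}$ is \emph{decreasing}: raising an estimate to any of these powers reverses it.

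I would start with the easier direction, namely necessity together with the right-hand inequality $C\le D_1$ in \eqref{C}. Here I would test \eqref{har1} against the explicit family $f_t:=v^{1-p'}\mathbf{1}_{B(a,t)}$ with $t=|x_0|_a$, $x_0\neq a$. Since $(1-p')p+1=1-p'$, the right-hand side of \eqref{har1} equals $C\,V(t)^{1/p}$. On the left, the inner integral $\int_{B(a,|x|_a)}f_t$ equals $V(t)$ for every $x\in\mathbb{X}\setminus B(a,t)$, so discarding the nonnegative contribution of $B(a,t)$ and using $q<0$ gives $\int_{\mathbb{X}}\big(\int_{B(a,|x|_a)}f_t\big)^qu\,dx\ge V(t)^qW(t)$; raising to the (decreasing) power $1/q$ bounds the left-hand side by $V(t)\,W(t)^{1/q}$. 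Combining the two estimates yields $C\,V(t)^{1/p}\le V(t)\,W(t)^{1/q}$, i.e. $C\le W(t)^{1/q}V(t)^{1/p'}$ for all $t>0$, and taking the infimum gives $C\le D_1$. In particular $D_1\ge C>0$, which is the necessity of \eqref{D1}.

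For sufficiency I would assume $D_1>0$ and apply the reverse Minkowski inequality \eqref{minkow}, with the measure $u\,dx$ on the outer copy of $\mathbb{X}$ and $F(x,y)=f(y)\mathbf{1}_{\{|y|_a<|x|_a\}}$, to collapse the nested structure:
\[
\left[\int_{\mathbb{X}}\Big(\int_{B(a,|x|_a)}f\Big)^{q}u\,dx\right]^{1/q}\ge\int_{\mathbb{X}}f(y)\,W(|y|_a)^{1/q}\,dy .
\]
It then remains to bound the linear functional on the right below by $C(p,q)\big(\int_{\mathbb{X}}f^{p}v\big)^{1/p}$. A first attempt is to apply reverse H\"older \eqref{Holin} to the pair $f\,v^{1/p}$ and $W^{1/q}v^{-1/p}$; using the polar decomposition to write $\int_{\mathbb{X}}W(|y|_a)^{p'/q}v^{1-p'}\,dy=\int_0^{\infty}W(r)^{p'/q}V'(r)\,dr$, this would give \eqref{har1} with $C=\big(\int_0^{\infty}W(r)^{p'/q}V'(r)\,dr\big)^{1/p'}$.

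The hard part is precisely that this auxiliary integral $\int_0^{\infty}W^{p'/q}V'\,dr$ generically \emph{diverges}, notably near the centre $a$ where $V\to0$, so that the naive split produces the useless value $C=0$, and the condition $D_1>0$ used as a crude pointwise bound does not repair it. The sharp argument instead inserts an auxiliary power $V(|y|_a)^{\gamma}$ before the reverse H\"older step, chosen so that the companion integral is explicitly computable and convergent through the identity $\int_0^{r}V^{-\gamma p'}V'\,ds=V(r)^{1-\gamma p'}/(1-\gamma p')$ for $1-\gamma p'>0$, while the constraint $W^{1/q}V^{1/p'}\ge D_1$ controls the remaining factor. Tracking the reversed inequalities through all the negative exponents and then optimising over the free parameter $\gamma$ is what produces exactly the constant $\big(\tfrac{p'}{p'+q}\big)^{-1/q}\big(\tfrac{q}{p'+q}\big)^{-1/p'}D_1$ in the left-hand inequality of \eqref{C}. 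I expect the genuine obstacle to be the control of the boundary contributions as $r\to0$ and $r\to\infty$, which is exactly where the local integrability of $u$ and $v$, together with $D_1>0$, must be used to guarantee that the insertion regularises the divergence rather than merely relocating it.
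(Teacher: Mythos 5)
Your necessity half --- testing \eqref{har1} with $f_t=v^{1-p'}\chi_{B(a,t)}$, using $(1-p')p+1=1-p'$, discarding the integral over $B(a,t)$ and flipping the estimate with the negative power $1/q$ --- is exactly the paper's Step 3 and is correct: it gives $C(p,q)\le D_1$, hence both the necessity of \eqref{D1} and the left half of \eqref{C}.

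The sufficiency half is where you depart from the paper, and there your reduction is to an intermediate inequality that is simply false, so no choice of the parameter $\gamma$ can close the argument. You first collapse the nested structure to
\begin{equation*}
\Big[\int_{\mathbb X}\Big(\int_{B(a,|x|_a)}f\Big)^{q}u\,dx\Big]^{1/q}\ \ge\ \int_{\mathbb X}f(y)\,W(|y|_a)^{1/q}\,dy,
\qquad W(r):=\int_{\mathbb X\setminus B(a,r)}u\,dy,
\end{equation*}
and then you must prove $\int_{\mathbb X}fW^{1/q}\,dy\ge C\big(\int_{\mathbb X}f^{p}v\,dx\big)^{1/p}$ for some $C>0$. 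Test this second inequality with $f_n=(W^{1/q}v^{-1/p})^{p'-1}v^{-1/p}\chi_{E_n}$ on exhausting annuli $E_n$: since $(p'-1)p=p'$, both $\int f_nW^{1/q}$ and $\int f_n^{p}v$ equal $\int_{E_n}W^{p'/q}v^{1-p'}\,dy$, so the ratio of the two sides is $\big(\int_{E_n}W^{p'/q}v^{1-p'}\,dy\big)^{1/p'}$. In the model case of the paper's first corollary (power weights on $\mathbb{G}$ or $\mathbb{R}^n$) one has $W^{1/q}V^{1/p'}\equiv D_1$, i.e.\ $W^{p'/q}=D_1^{p'}V^{-1}$, with $V(r):=\int_{B(a,r)}v^{1-p'}\,dy$ increasing from $0$ to $+\infty$; hence $\int_{E_n}W^{p'/q}v^{1-p'}\,dy=D_1^{p'}\ln\big(V\!\mid_{E_n}\big)$ can be made arbitrarily large, the ratio tends to $0$ (recall $1/p'<0$), and the intermediate inequality admits \emph{no} positive constant precisely in the situation the theorem must cover. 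Your proposed repair, inserting $V^{\gamma}$ before reverse H\"older, replaces the companion integral by $D_1^{p'}\int_0^{\infty}V^{\gamma p'-1}\,dV$, which diverges for every $\gamma$ (at the centre if $\gamma p'\le0$, at infinity if $\gamma p'>0$): exactly as you feared, the insertion relocates the divergence but never removes it. It also leaves the factor $\big(\int f^{p}vV^{-\gamma p}\big)^{1/p}$ in place of $\big(\int f^{p}v\big)^{1/p}$, and these cannot be compared pointwise because $V$ vanishes at $a$ and is unbounded.

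There is a secondary problem with the collapsing step itself: with the paper's convention $0^{q}=+\infty$, the kernel $F(x,y)=f(y)\chi_{\{|y|_a<|x|_a\}}$ satisfies $\int_{\mathbb X}F^{q}(x,y)u(x)\,dx=+\infty$ for every $y$ (it vanishes on $B(a,|y|_a)$), so \eqref{minkow} returns the vacuous bound $\ge0$; the ``triangular'' version you actually invoke is false in general (on $(0,\infty)$ with $q=-1$, $u(r)=e^{-r}$, $f\equiv1$, the left side is $\big(\int_0^{\infty}r^{-1}e^{-r}dr\big)^{-1}=0$ while $\int_0^{\infty}W(s)^{-1}ds=\int_0^{\infty}e^{s}ds=+\infty$). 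The paper's own use of \eqref{minkow} on a kernel containing $\chi_{\{s<r\}}$ has the same formal defect, so this point alone would not separate you from the paper; what does is the ordering of the two tools. In the paper, reverse H\"older is applied \emph{inside}, over each ball $B(a,|x|_a)$, where the companion integral is the finite quantity $\int_0^{V(r)}t^{\alpha p'}dt=V(r)^{1+\alpha p'}/(1+\alpha p')$ for $1+\alpha p'>0$; the surviving $r$-dependent factor $V(r)^{1+\alpha p'}$ is carried through a second (self-improving) reverse H\"older and only then through Minkowski, and the hypothesis $D_1>0$ enters at the very end via integration by parts, giving $D(\alpha)\ge(-\alpha p')^{-1/q}D_1$ and, after optimising $\alpha=-1/(p'+q)$, the constant in \eqref{C}. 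Collapsing with Minkowski first destroys exactly this ball-localised structure, which is what keeps all companion integrals finite; that is the gap, and it is structural rather than technical.
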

\begin{proof}
Let us divide proof of this theorem in several steps.

\textbf{Step 1.} Let us denote   $g(x):=f(x)v^{\frac{1}{p}}(x)$. Let  $\frac{1}{p}+\frac{1}{p'}=1,$ $\alpha\in\left(0,-\frac{1}{p'}\right)$ and $z(x)=v^{-\frac{1}{p}}(x)$. Let us denote, using \eqref{EQ:polarintro},
\begin{align}
&
V(x):=\int_{B(a,|x|_{a})}v^{-\frac{p'}{p}}(y)dy=\int_{B(a,|x|_{a})}z^{p'}(y)dy,\\&
H_{1}(s):=\int_{\sum_{s}}\lambda(s,\sigma)g(s,\sigma)z(s,\sigma)d\sigma,\label{h1}\\&
H_{2}(s):=\int_{\sum_{s}}\lambda(s,\sigma)z^{p'}(s,\sigma)V^{\alpha p'}(s,\sigma)d\sigma,\label{h2}\\&
H_{3}(s):=\int_{\sum_{s}}\lambda(s,\sigma)g^{p}(s,\sigma)V^{-\alpha p}(s,\sigma)d\sigma,\label{h3}\\&
U(r):=\int_{\sum_{r}}\lambda(r,\omega)u(r,\omega)d\omega.\label{u}
\end{align}

After some calculation, we compute, using reverse H\"{o}lder's inequality (Theorem \ref{Hol}),
\begin{equation}
\begin{split}
A:&=\int_{\mathbb X}\left(\int_{B(a,|x|_{a})}f(y)dy\right)^{q}u(x)dx=\int_{\mathbb X}\left(\int_{B(a,|x|_{a})}g(y)z(y)dy\right)^{q}u(x)dx\\&
=\int_{\mathbb X}\left(\int_{B(a,|x|_{a})}g(y)z(y)dy\right)^{p}\left(\int_{B(a,|x|_{a})}g(y)z(y)dy\right)^{q-p}u(x)dx\\&
=\int_{\mathbb X}\left(\int_{B(a,|x|_{a})}g(y)V^{-\alpha}(y)V^{\alpha}(y)z(y)dy\right)^{p}\left(\int_{B(a,|x|_{a})}g(y)z(y)dy\right)^{q-p}u(x)dx\\&
\geq \int_{\mathbb X}\left(\int_{B(a,|x|_{a})}g^{p}(y)V^{-\alpha p}(y)dy\right)\left(\int_{B(a,|x|_{a})}z^{p'}(y)V^{\alpha p'}(y)dy\right)^{\frac{p}{p'}}\\&
\times\left(\int_{B(a,|x|_{a})}g(y)z(y)dy\right)^{q-p}u(x)dx\\&
=\int_{0}^{\infty}U(r)\left(\int_{0}^{r}H_{1}(s)ds\right)^{q-p}\left(\int_{0}^{r}H_{2}(s)ds\right)^{\frac{p}{p'}}\left(\int_{0}^{r}H_{3}(s)ds\right)dr.
\end{split}
\end{equation}

Let us denote by $\tilde{H}_{2}(s):=\int_{\sum_{s}}\lambda(s,\sigma)z^{p'}(s,\sigma)d\sigma.$ Then we have
\begin{align}
    \left(\int_{0}^{r}H_{2}(s)ds\right)^{\frac{p}{p'}}&=\left(\int_{0}^{r}\int_{\sum_{s}}\lambda(s,\sigma)z^{p'}(s,\sigma)V^{\alpha p'}(s,\sigma)dsd\sigma\right)^{\frac{p}{p'}}\nonumber \\&
=\left(\int_{0}^{r}\int_{\sum_{s}}\lambda(s,\sigma)z^{p'}(s,\sigma)\left(\int_{0}^{s}\int_{\sum_{\rho}}\lambda(\rho,\sigma_{1})z^{p'}(\rho,\sigma_{1})d\rho d\sigma_{1}\right)^{\alpha p'}dsd\sigma\right)^{\frac{p}{p'}}\nonumber\\&
=\left(\int_{0}^{r}\tilde{H}_{2}(s)\left(\int_{0}^{s}\tilde{H}_{2}(\rho)d\rho \right)^{\alpha p'}ds\right)^{\frac{p}{p'}}\label{2.11}\\&
=\left(\int_{0}^{r}\left(\int_{0}^{s}\tilde{H}_{2}(\rho)d\rho \right)^{\alpha p'}d_{s}\left(\int_{0}^{s}\tilde{H}_{2}(\rho)d\rho\right)\right)^{\frac{p}{p'}}\nonumber\\&
\stackrel{1+\alpha p'>0}=\frac{1}{(1+\alpha p')^{\frac{p}{p'}}}\left(\left(\int_{0}^{s}\tilde{H}_{2}(\rho)d\rho\right)^{1+\alpha p'}\big{|}_{0}^{r}\right)^{\frac{p}{p'}}\nonumber\\&
\stackrel{1+\alpha p'>0}=\frac{1}{(1+\alpha p')^{\frac{p}{p'}}}\left(\int_{0}^{r}\tilde{H}_{2}(\rho)d\rho\right)^{\frac{p(1+\alpha p')}{p'}}\nonumber\\&
=\frac{V_{1}^{\frac{p(1+\alpha p')}{p'}}(r)}{(1+\alpha p')^{\frac{p}{p'}}}\nonumber,
\end{align}
where $V_{1}(r)=\int_{0}^{r}\tilde{H}_{2}(\rho)d\rho$. By using this fact and reverse H\"{o}lder's inequality with $\frac{p}{q}+\frac{q-p}{q}=1$, we obtain
\begin{align*}
A&\geq\int_{0}^{\infty}\left(\int_{0}^{r}H_{3}(s)ds\right)U(r)\left(\int_{0}^{r}H_{1}(s)ds\right)^{q-p}\left(\int_{0}^{r}H_{2}(s)ds\right)^{\frac{p}{p'}}dr\\&
\stackrel{\eqref{2.11}}=\frac{1}{(1+\alpha p')^{\frac{p}{p'}}}\int_{0}^{\infty}\left(\int_{0}^{r}H_{3}(s)ds\right)U(r)\left(\int_{0}^{r}H_{1}(s)ds\right)^{q-p}V_{1}^{\frac{p(1+\alpha p')}{p'}}(r)dr\\&
=\frac{1}{(1+\alpha p')^{\frac{p}{p'}}}\int_{0}^{\infty}U^{\frac{p}{q}}(r)\left(\int_{0}^{r}H_{3}(s)ds\right)V_{1}^{\frac{p(1+\alpha p')}{p'}}(r)\left(\int_{0}^{r}H_{1}(s)ds\right)^{q-p}U^{\frac{q-p}{q}}dr\\&
\geq\frac{1}{(1+\alpha p')^{\frac{p}{p'}}}\left(\int_{0}^{\infty}\left(\int_{0}^{r}H_{3}(s)ds\right)^{\frac{q}{p}}U(r)V_{1}^{\frac{q(1+\alpha p')}{p'}}(r)dr\right)^{\frac{p}{q}}\\&
\times\left(\int_{0}^{\infty}\left(\int_{0}^{r}H_{1}(s)ds\right)^{q}U(r)dr\right)^{\frac{q-p}{q}}\\&
=\frac{1}{(1+\alpha p')^{\frac{p}{p'}}}\left(\int_{0}^{\infty}U(r)\left(\int_{0}^{r}H_{3}(s)ds\right)^{\frac{q}{p}}V_{1}^{\frac{q(1+\alpha p')}{p'}}(r)dr\right)^{\frac{p}{q}}\\&\times\left(\int_{\mathbb X}\left(\int_{B(a,|x|_{a})}g(y)z(y)dy\right)^{q}u(x)dx\right)^{\frac{q-p}{q}}\\&
=\frac{A^{\frac{q-p}{q}}}{(1+\alpha p')^{\frac{p}{p'}}}\left(\int_{0}^{\infty}U(r)\left(\int_{0}^{r}H_{3}(s)ds\right)^{\frac{q}{p}}V_{1}^{\frac{q(1+\alpha p')}{p'}}(r)dr\right)^{\frac{p}{q}}.
\end{align*}
Therefore,
$$A^{\frac{p}{q}}\geq\frac{1}{(1+\alpha p')^{\frac{p}{p'}}}\left(\int_{0}^{\infty}U(r)\left(\int_{0}^{r}H_{3}(s)ds\right)^{\frac{q}{p}}V_{1}^{\frac{q(1+\alpha p')}{p'}}(r)dr\right)^{\frac{p}{q}}.$$

Let us treat the following integral with reverse Minkowski inequality with exponent $\frac{q}{p}<0$ , so that we obtain
\begin{align*}
&\left(\int_{0}^{\infty}U(r)\left(\int_{0}^{r}H_{3}(s)ds\right)^{\frac{q}{p}}V_{1}^{\frac{q(1+\alpha p')}{p'}}(r)dr\right)^{\frac{p}{q}}\\&
=\left(\int_{0}^{\infty}\left(\int_{0}^{r}U^{\frac{p}{q}}(r)H_{3}(s)V^{\frac{(1+\alpha p')p}{p'}}(r)ds\right)^{\frac{q}{p}}dr\right)^{\frac{p}{q}}\\&
=\left(\int_{0}^{\infty}\left(\int_{0}^{\infty}U^{\frac{p}{q}}(r)H_{3}(s)V_{1}^{\frac{(1+\alpha p')p}{p'}}(r)\chi_{\{s<r\}}ds\right)^{\frac{q}{p}}dr\right)^{\frac{p}{q}}\\&
\stackrel{\eqref{minkow}}\geq \int_{0}^{\infty}H_{3}(s)\left(\int_{s}^{\infty}U(r)V_{1}^{\frac{q(1+\alpha p')}{p'}}(r)dr\right)^{\frac{p}{q}}ds\\&
=\int_{\mathbb X}g^{p}(y)V^{-\alpha p}(y)\left(\int_{\mathbb X\setminus B(a,|y|_{a})}u(x)V^{\frac{q(1+\alpha p')}{p'}}(x)dx\right)^{\frac{p}{q}}dy\\&
\geq D^{p}(\alpha)\int_{\mathbb X}g^{p}(y)dy,
\end{align*}
where $D(\alpha):= \inf_{x\neq a}D(x,\alpha)=\inf_{x\neq a}V^{-\alpha }(x)\left(\int_{\mathbb X\setminus B(a,|x|_{a})}u(y)V^{\frac{q(1+\alpha p')}{p'}}(y)dy\right)^{\frac{1}{q}}$ and $\chi$ is the cut-off function.
Then we obtain
\begin{align*}
A^{\frac{p}{q}}&=\left(\int_{\mathbb X}\left(\int_{B(a,|x|_{a})}f(y)dy\right)^{q}u(x)dx\right)^{\frac{p}{q}}\geq \frac{D^{p}(\alpha)}{(1+\alpha p')^{\frac{p}{p'}}} \int_{\mathbb X}g^{p}(y)dy\\&
=\frac{D^{p}(\alpha)}{(1+\alpha p')^{\frac{p}{p'}}} \int_{\mathbb X}f^{p}(y)v(y)dy.
\end{align*}

\textbf{Step 2.} Let us recall  $D_{1}$, given in the following form:
\begin{equation}
0< D_{1}=\inf_{x\neq a}\left[\left(\int_{\mathbb X\setminus B(a,|x|_{a})}u(x)dx\right)^{\frac{1}{q}}\left(\int_{B(a,|x|_{a})}v^{1-p'}(y)dy\right)^{\frac{1}{p'}}\right].
\end{equation}

Let us note a relation between $V$ and $V_{1}$,
\begin{align}
V(x)=\int_{B(a,|x|_{a})}v^{-\frac{p'}{p}}dx&=\int_{B(a,|x|_{a})}z^{p'}dx\nonumber\\&
=\int_{0}^{|x|_{a}}\int_{\sum_{r}}z^{p'}(r,\omega)\lambda(r,\omega)drd\omega\label{VV1}\\&
=\int_{0}^{|x|_{a}}\tilde{H}_{2}(r)dr\nonumber\\&
=:V_{1}(|x|_{a})\nonumber,
\end{align}
where, as before, $\tilde{H}_{2}(r)=\int_{\sum_{r}}z^{p'}(r,\omega)\lambda(r,\omega)d\omega$.
Then let us calculate the following integral:
\begin{equation}
\begin{split}
I&=\int_{\mathbb X\setminus B(a,|x|_{a})}u(y)V^{\frac{q(1+\alpha p')}{p'}}(y)dy
=\int_{|x|_{a}}^{\infty}\int_{\sum_{r}}\lambda(r,\omega)u(r,\omega)V^{\frac{q(1+\alpha p')}{p'}}_{1}(r)drd\omega\\&
=\int_{|x|_{a}}^{\infty}U(r)V^{\frac{q(1+\alpha p')}{p'}}_{1}(r)dr=\int_{|x|_{a}}^{\infty}V^{\frac{q(1+\alpha p')}{p'}}_{1}(r)d_{r}\left(-\int_{r}^{\infty}U(s)ds\right)\\&
=-V^{\frac{q(1+\alpha p')}{p'}}_{1}(r)\int_{r}^{\infty}U(s)ds\big{|}_{|x|_{a}}^{\infty}\\&
+\frac{q(1+\alpha p')}{p'}\int_{|x|_{a}}^{\infty}\left(\int_{r}^{\infty}U(s)ds\right)V_{1}^{\frac{q(1+\alpha p')}{p'}-1}(r)dV_{1}(r)\\&
\stackrel{\frac{q}{p'}>0}=V^{\frac{q(1+\alpha p')}{p'}}_{1}(|x|_{a})\int_{|x|_{a}}^{\infty}U(s)ds\\&+\frac{q(1+\alpha p')}{p'}\int_{|x|_{a}}^{\infty}\left(\int_{r}^{\infty}U(s)ds\right)V_{1}^{\frac{q(1+\alpha p')}{p'}-1}(r)dV_{1}(r)\\&
=V^{\frac{q}{p'}}_{1}(|x|_{a})\left(\int_{|x|_{a}}^{\infty}U(s)ds\right) V^{\alpha q}_{1}(|x|_{a})\\&
+\frac{q(1+\alpha p')}{p'}\int_{|x|_{a}}^{\infty}\left(\int_{r}^{\infty}U(s)ds\right)V_{1}^{\frac{q}{p'}}(r)V_{1}^{\alpha q-1}(r)dV_{1}(r)\\&
\leq D_{1}^{q}V_{1}^{\alpha q}(|x|_{a})+\frac{q(1+\alpha p')D_{1}^{q}}{p'}\int_{|x|_{a}}^{\infty}V_{1}^{\alpha q-1}(r)dV_{1}(r)\\&
=D_{1}^{q}V_{1}^{\alpha q}(|x|_{a})+\frac{(1+\alpha p')D^{q}_{1}}{\alpha p'}V^{\alpha q}_{1}(r)\big{|}_{|x|_{a}}^{\infty}\\&
=D_{1}^{q}V_{1}^{\alpha q}(|x|_{a})+\lim_{r\rightarrow \infty}\frac{(1+\alpha p')D^{q}_{1}}{\alpha p'}V^{\alpha q}_{1}(r)-\frac{(1+\alpha p')D^{q}_{1}}{\alpha p'}V^{\alpha q}_{1}(|x|_{a})\\&
\stackrel{\frac{(1+\alpha p')D^{q}_{1}}{\alpha p'}<0}\leq D_{1}^{q}V_{1}^{\alpha q}(|x|_{a})-\frac{(1+\alpha p')D^{q}_{1}}{\alpha p'}V^{\alpha q}_{1}(|x|_{a})\\&
\stackrel{\eqref{VV1}}= -\frac{1}{\alpha p'}D_{1}^{q}V^{\alpha q}(x).
\end{split}
\end{equation}
Then we have $I=D^{q}(x,\alpha)V^{\alpha q}(x)\leq -\frac{1}{\alpha p'}D_{1}^{q}V^{\alpha q}(x)$. Consequently,
$$D(x,\alpha)\geq(-\alpha p')^{-\frac{1}{q}}D_{1},$$
it means $$D(\alpha)\geq(-\alpha p')^{-\frac{1}{q}}D_{1}.$$
Finally, we obtain
$$A^{\frac{1}{q}}\geq\frac{D_{1}(-\alpha p')^{-\frac{1}{q}}}{(1+\alpha p')^{\frac{1}{p'}}} \left(\int_{\mathbb X}f^{p}(y)v(y)dy\right)^{\frac{1}{p}}.$$
Let us consider the function $k(\alpha):=\frac{(-\alpha p')^{-\frac{1}{q}}}{(1+\alpha p')^{\frac{1}{p'}}}=(-\alpha p')^{-\frac{1}{q}}(1+\alpha p')^{-\frac{1}{p'}}$, where $\alpha\in\left(0,-\frac{1}{p'}\right)$. Firstly, let us find extremum of this function. We have
\begin{equation}
\begin{split}
\frac{d k(\alpha)}{d\alpha}&=-\frac{1}{q}(-p')(-\alpha p')^{-\frac{1}{q}-1}(1+\alpha p')^{-\frac{1}{p'}}+\left(-\frac{1}{p'}\right)p'(1+\alpha p')^{-\frac{1}{p'}-1}(-\alpha p')^{-\frac{1}{q}}\\&
=p'(-\alpha p')^{-\frac{1}{q}-1}(1+\alpha p')^{-\frac{1}{p'}-1}\left(\frac{(1+\alpha p')}{q}+\alpha\right)\\&
=\frac{p'}{q}(-\alpha p')^{-\frac{1}{q}-1}(1+\alpha p')^{-\frac{1}{p'}-1}\left(\alpha(p'+q)+1\right)=0,
\end{split}
\end{equation}
it implies that its solution is
$$\alpha_{1}=-\frac{1}{p'+q}\in\left(0,-\frac{1}{p'}\right).$$
By taking the second derivative of $k(\alpha)$ at the point $\alpha_{1}$ and by denoting $k_{1}(\alpha)=(-\alpha p')^{-\frac{1}{q}-1}(1+\alpha p')^{-\frac{1}{p'}-1}$, we obtain
\begin{equation}
\begin{split}
\frac{d^{2} k(\alpha)}{d\alpha^{2}}\big{|}_{\alpha=\alpha_{1}}&=\left(\frac{p'}{q}(-\alpha p')^{-\frac{1}{q}-1}(1+\alpha p')^{-\frac{1}{p'}-1}\left(\alpha(p'+q)+1\right)\right)'\big{|}_{\alpha=\alpha_{1}}\\&
=\frac{p'}{q}\left(k_{1}(\alpha)\left(\alpha(p'+q)+1\right)\right)'\big{|}_{\alpha=\alpha_{1}}\\&
=\frac{p'}{q}\left(\frac{dk_{1}(\alpha)}{d\alpha}\left(\alpha(p'+q)+1\right)+(p'+q)k_{1}(\alpha)\right)\big{|}_{\alpha=\alpha_{1}}\\&
=\frac{p'}{q}\left(\frac{dk_{1}(\alpha)}{d\alpha}\big{|}_{\alpha=\alpha_{1}}\underbrace{\left(\alpha_{1}(p'+q)+1\right)}_{=0}+(p'+q)k_{1}(\alpha_{1})\right)\\&
=\frac{p'(p'+q)}{q}k_{1}(\alpha_{1})=\frac{p'(p'+q)}{q}(-\alpha_{1} p')^{-\frac{1}{q}-1}(1+\alpha_{1} p')^{-\frac{1}{p'}-1}\\&
=\underbrace{\frac{p'(p'+q)}{q}}_{<0}\underbrace{\left(\frac{p'}{p'+q}\right)^{-\frac{1}{q}-1}}_{>0}\underbrace{\left(\frac{q}{p'+q}\right)^{-\frac{1}{p'}-1}}_{>0}<0.
\end{split}
\end{equation}
It means, function $k(\alpha)$ has supremum at the point $\alpha=\alpha_{1}$. Then, the biggest constant has the following relationship $C(p,q)\geq \left(\frac{p'}{p'+q}\right)^{-\frac{1}{q}}\left(\frac{q}{p'+q}\right)^{-\frac{1}{p'}}D_{1}$.

\textbf{Step 3.} Let us give a necessity condition of  inequality \eqref{har1}. By using \eqref{har1} and $f(x)=v^{-\frac{p'}{p}}(x)\chi_{\{(0,t)\}}(|x|_{a})$, we compute
\begin{equation}
\begin{split}
C(p,q)&\leq \left[\int_{\mathbb X}\left(\int_{B(a,|x|_{a})}f(y)dy\right)^{q}u(x)dx\right]^{\frac{1}{q}} \left[\int_{\mathbb X}f^{p}(y)v(y)dx\right]^{-\frac{1}{p}}\\&
=\left[\int_{\mathbb X}\left(\int_{|y|_{a}\leq t}v^{1-p'}(y)dy\right)^{q}u(x)dx\right]^{\frac{1}{q}}\left[\int_{|y|_{a}\leq t}v^{-p'}(y)v(y)dx\right]^{-\frac{1}{p}}\\&
\stackrel{q<0}\leq\left[\int_{|x|_{a}\geq t}\left(\int_{|y|_{a}\leq t}v^{1-p'}(y)dy\right)^{q}u(x)dx\right]^{\frac{1}{q}}\left[\int_{|y|_{a}\leq t}v^{-p'}(y)v(y)dx\right]^{-\frac{1}{p}}\\&
=\left[\int_{|x|_{a}\geq t}u(x)dx\right]^{\frac{1}{q}}\left[\int_{|y|_{a}\leq t}v^{-p'}(y)v(y)dx\right]^{\frac{1}{p'}},
\end{split}
\end{equation}
 which gives $D_{1}\geq C(p,q)$.
\end{proof}
Let us give conjugate reverse integral Hardy inequality.
\begin{thm}
Assume that $p\in(0,1)$ and $q<0$. Let $\mathbb X$ be a metric measure space with a polar decomposition at $a$. Suppose that $u,v>0$ are locally integrable functions on $\mathbb X$. Then the inequality
\begin{equation}\label{har*}
\left[\int_{\mathbb X}\left(\int_{\mathbb X\setminus B(a,|x|_{a})}f(y)dy\right)^{q}u(x)dx\right]^{\frac{1}{q}}\geq C(p,q)\left(\int_{\mathbb X}f^{p}(x)v(x)dx\right)^{\frac{1}{p}}
\end{equation}
holds for some $C(p,q)>0$ and for all non-negative real-valued measurable functions $f$, if only if
\begin{equation}\label{D2}
0< D_{2}:=\inf_{x\neq a}\left[\left(\int_{ B(a,|x|_{a})}u(y)dy\right)^{\frac{1}{q}}\left(\int_{\mathbb X\setminus B(a,|x|_{a})}v^{1-p'}(y)dy\right)^{\frac{1}{p'}}\right].
\end{equation}
Moreover, the biggest constant $C(p,q)$ in \eqref{har*} has the following relation to $D_{2}$:
\begin{equation}
D_{2}\geq C(p,q)\geq \left(\frac{p'}{p'+q}\right)^{-\frac{1}{q}}\left(\frac{q}{p'+q}\right)^{-\frac{1}{p'}}D_{2}.
\end{equation}

\end{thm}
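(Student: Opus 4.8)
The plan is to mirror the proof of the reverse integral Hardy inequality \eqref{har1}, reversing the roles of the ball $B(a,|x|_a)$ and its complement throughout, so that every radial integral $\int_0^r$ is replaced by $\int_r^\infty$ and vice versa. The three-step skeleton carries over: sufficiency via two applications of reverse H\"older (Theorem \ref{Hol}) and one application of the reverse Minkowski inequality \eqref{minkow}; optimization over the parameter $\alpha\in(0,-1/p')$; and necessity via a test function. The only genuinely new bookkeeping occurs in the integration-by-parts estimate that produces the constant $D_2$.

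For sufficiency, I would set $g=fv^{1/p}$, $z=v^{-1/p}$, and now define the \emph{decreasing} radial profile
$$V(x):=\int_{\mathbb X\setminus B(a,|x|_a)}z^{p'}(y)\,dy=\int_r^\infty\tilde H_2(s)\,ds=:V_1(r),$$
together with the analogues of $H_1,H_2,H_3,U$ in which the inner integration runs over $\mathbb X\setminus B(a,|x|_a)$. Writing $A$ for the left-hand side of \eqref{har*} raised to the power $q$, I would split $\bigl(\int gz\bigr)^q=\bigl(\int gz\bigr)^p\bigl(\int gz\bigr)^{q-p}$, apply reverse H\"older with exponents $p,p'$ to the first factor, then apply reverse H\"older again with the partition $\frac{p}{q}+\frac{q-p}{q}=1$. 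The computation \eqref{2.11}, now with $V_1(r)=\int_r^\infty\tilde H_2$, yields
$$A^{\frac{p}{q}}\geq\frac{1}{(1+\alpha p')^{p/p'}}\left(\int_0^\infty U(r)\Bigl(\int_r^\infty H_3(s)\,ds\Bigr)^{q/p}V_1^{\frac{q(1+\alpha p')}{p'}}(r)\,dr\right)^{p/q}.$$
I would then invoke \eqref{minkow} with exponent $q/p<0$, now with the cut-off $\chi_{\{s>r\}}$, which after the Minkowski step produces the inner region $\int_0^s U(r)\,dr$ in place of $\int_s^\infty$, i.e. the \emph{ball} region.

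The crux is the integration-by-parts estimate of
$$I:=\int_{B(a,|x|_a)}u(y)V^{\frac{q(1+\alpha p')}{p'}}(y)\,dy=\int_0^{|x|_a}U(r)V_1^{\frac{q(1+\alpha p')}{p'}}(r)\,dr,$$
which is where the proof genuinely differs. Because $V_1(r)=\int_r^\infty\tilde H_2$ is now decreasing, one writes $U(r)\,dr=d_r\bigl(\int_0^r U(s)\,ds\bigr)$, and the boundary term appears at $r\to 0^+$ rather than at $r\to\infty$; I expect verifying that this term carries the favorable sign (or vanishes, since $V_1^{\alpha q}(0)=0$ whenever $V_1(0)=\infty$, as $\alpha q<0$) to be the main technical obstacle, together with tracking which terms are non-positive under $q<0$, $p\in(0,1)$, and $1+\alpha p'>0$. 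This should give $I\leq-\frac{1}{\alpha p'}D_2^{q}V^{\alpha q}(x)$ and hence $D(\alpha)\geq(-\alpha p')^{-1/q}D_2$. The optimization over $\alpha$ is then \emph{identical} to Step 2 of the previous proof, since the function $k(\alpha)=(-\alpha p')^{-1/q}(1+\alpha p')^{-1/p'}$ is unchanged, again attaining its supremum at $\alpha_1=-1/(p'+q)$ and yielding the stated constant. Finally, for necessity I would test \eqref{har*} against $f(x)=v^{1-p'}(x)\chi_{\{|x|_a\geq t\}}$, supported now on the complement of the ball, and restrict the outer integral to $\{|x|_a\leq t\}$, using $q<0$ to reverse the inequality; on that set the inner integral is the constant $\int_{\mathbb X\setminus B(a,t)}v^{1-p'}$, so the chain collapses to $\bigl(\int_{B(a,t)}u\bigr)^{1/q}\bigl(\int_{\mathbb X\setminus B(a,t)}v^{1-p'}\bigr)^{1/p'}$, whence $D_2\geq C(p,q)$.
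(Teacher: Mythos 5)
Your proposal is correct, and its Steps 1 and 3 coincide (after the ball/complement mirroring) with the paper's own argument; the genuine divergence is at the crux, namely the lower bound $\widetilde D(\alpha)\geq(-\alpha p')^{-1/q}D_2$ for $\widetilde D(x,\alpha)=G^{-\alpha}(x)\bigl(\int_{B(a,|x|_a)}u(y)G^{e}(y)\,dy\bigr)^{1/q}$ with $e=\frac{q(1+\alpha p')}{p'}>0$ and $G_1(r)=\int_r^\infty\tilde H_2(s)\,ds$ as in \eqref{VV1*}. You transplant the integration-by-parts estimate from the proof of \eqref{har1}, whereas the paper attempts a shortcut: since $G$ is non-increasing in $|\cdot|_a$, it replaces $G^{e}(y)$ by $G^{e}(x)$ inside the integral and then uses $1/q<0$ to flip the inequality. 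But as printed that comparison runs backwards: for $y\in B(a,|x|_a)$ one has $|y|_a\leq|x|_a$, hence $G(y)\geq G(x)$ and, since $e>0$, $\int_{B(a,|x|_a)}u(y)G^{e}(y)\,dy\geq G^{e}(x)\int_{B(a,|x|_a)}u(y)\,dy$ --- the opposite of the paper's displayed inequality --- so raising to the power $1/q<0$ yields only the useless upper bound $\widetilde D(x,\alpha)\leq G^{1/p'}(x)\bigl(\int_{B(a,|x|_a)}u\bigr)^{1/q}$; monotonicity alone cannot produce the needed lower bound. Your route does produce it: setting $W(r)=\int_0^r U(s)\,ds$, condition \eqref{D2} gives $W(r)\leq D_2^{q}G_1^{-q/p'}(r)$; the boundary term at $r\to0^+$ satisfies $G_1^{e}(\varepsilon)W(\varepsilon)\leq D_2^{q}G_1^{\alpha q}(\varepsilon)\to0$ when $G_1(0^+)=\infty$ (since $\alpha q<0$), and vanishes trivially when $G_1(0^+)<\infty$; the remaining term is bounded by $-\frac{e}{\alpha q}D_2^{q}G_1^{\alpha q}(|x|_a)$ with $\frac{e}{\alpha q}=\frac{1+\alpha p'}{\alpha p'}$, so that $I\leq D_2^{q}G_1^{\alpha q}(|x|_a)\bigl(1-\frac{1+\alpha p'}{\alpha p'}\bigr)=-\frac{1}{\alpha p'}D_2^{q}G^{\alpha q}(x)$, exactly as you predicted, and raising to $1/q<0$ gives the desired bound. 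So your approach is not merely a different route: it is the one that actually proves the step, and it simultaneously repairs the sign error in the paper's Step 2 (as well as the paper's typo defining $G_1$ as $\int_0^r\tilde H_2$ after \eqref{2.11*}). The optimization over $\alpha$ and the necessity argument with $f=v^{1-p'}\chi_{\{|x|_a\geq t\}}$ then agree with the paper verbatim.
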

\begin{proof}
Proof of this theorem is similar to the previous case.
Let us divide proof of this theorem in several steps.

\textbf{Step 1.} Let us denote   $g(x):=f(x)v^{\frac{1}{p}}(x)$. Let  $\frac{1}{p}+\frac{1}{p'}=1,$ $\alpha\in\left(0,-\frac{1}{p'}\right)$ and $z(x)=v^{-\frac{1}{p}}(x)$. Let us denote,
$$G(x):=\int_{\X\setminus B(a,|x|_{a})}v^{-\frac{p'}{p}}(y)dy=\int_{\X\setminus B(a,|x|_{a})}z^{p'}(y)dy.$$
After some calculation, we compute, using reverse H\"{o}lder's inequality (Theorem \ref{Hol}),
\begin{equation}
\begin{split}
B:&=\int_{\mathbb X}\left(\int_{\X\setminus B(a,|x|_{a})}f(y)dy\right)^{q}u(x)dx=\int_{\mathbb X}\left(\int_{\X\setminus B(a,|x|_{a})}g(y)z(y)dy\right)^{q}u(x)dx\\&
=\int_{\mathbb X}\left(\int_{\X\setminus B(a,|x|_{a})}g(y)z(y)dy\right)^{p}\left(\int_{\X\setminus B(a,|x|_{a})}g(y)z(y)dy\right)^{q-p}u(x)dx\\&
=\int_{\mathbb X}\left(\int_{\X\setminus B(a,|x|_{a})}g(y)G^{-\alpha}(y)G^{\alpha}(y)z(y)dy\right)^{p}\left(\int_{\X\setminus B(a,|x|_{a})}g(y)z(y)dy\right)^{q-p}u(x)dx\\&
\geq \int_{\mathbb X}\left(\int_{\X\setminus B(a,|x|_{a})}g^{p}(y)G^{-\alpha p}(y)dy\right)\left(\int_{\X\setminus B(a,|x|_{a})}z^{p'}(y)G^{\alpha p'}(y)dy\right)^{\frac{p}{p'}}\\&
\times\left(\int_{\X\setminus B(a,|x|_{a})}g(y)z(y)dy\right)^{q-p}u(x)dx\\&
=\int_{0}^{\infty}U(r)\left(\int_{r}^{\infty}H_{1}(s)ds\right)^{q-p}\left(\int_{r}^{\infty}H_{2}(s)ds\right)^{\frac{p}{p'}}\left(\int_{r}^{\infty}H_{3}(s)ds\right)dr,
\end{split}
\end{equation}
where $U(r),H_{i}(s), \,i=1,2,3,$ are defined in \eqref{h1}-\eqref{u}. Let us denote by $\tilde{H}_{2}(s):=\int_{\sum_{s}}\lambda(s,\sigma)z^{p'}(s,\sigma)d\sigma.$ Then we have
\begin{align}
    \left(\int_{r}^{\infty}H_{2}(s)ds\right)^{\frac{p}{p'}}&=\left(\int_{r}^{\infty}\int_{\sum_{s}}\lambda(s,\sigma)z^{p'}(s,\sigma)V^{\alpha p'}(s,\sigma)dsd\sigma\right)^{\frac{p}{p'}}\nonumber \\&
=\left(\int_{r}^{\infty}\int_{\sum_{s}}\lambda(s,\sigma)z^{p'}(s,\sigma)\left(\int_{r}^{\infty}\int_{\sum_{\rho}}\lambda(\rho,\sigma_{1})z^{p'}(\rho,\sigma_{1})d\rho d\sigma_{1}\right)^{\alpha p'}dsd\sigma\right)^{\frac{p}{p'}}\nonumber\\&
=\left(\int_{r}^{\infty}\tilde{H}_{2}(s)\left(\int_{s}^{\infty}\tilde{H}_{2}(\rho)d\rho \right)^{\alpha p'}ds\right)^{\frac{p}{p'}}\label{2.11*}\\&
=\left(\int_{r}^{\infty}\left(\int_{s}^{\infty}\tilde{H}_{2}(\rho)d\rho \right)^{\alpha p'}d_{s}\left(-\int_{s}^{\infty}\tilde{H}_{2}(\rho)d\rho\right)\right)^{\frac{p}{p'}}\nonumber\\&
=\left(-\int_{r}^{\infty}\left(\int_{s}^{\infty}\tilde{H}_{2}(\rho)d\rho \right)^{\alpha p'}d_{s}\left(\int_{s}^{\infty}\tilde{H}_{2}(\rho)d\rho\right)\right)^{\frac{p}{p'}}\nonumber\\&
\stackrel{1+\alpha p'>0}=\frac{1}{(1+\alpha p')^{\frac{p}{p'}}}\left(-\left(\int_{s}^{\infty}\tilde{H}_{2}(\rho)d\rho\right)^{1+\alpha p'}\big{|}_{r}^{\infty}\right)^{\frac{p}{p'}}\nonumber\\&
\stackrel{1+\alpha p'>0}=\frac{1}{(1+\alpha p')^{\frac{p}{p'}}}\left(\int_{r}^{\infty}\tilde{H}_{2}(\rho)d\rho\right)^{\frac{p(1+\alpha p')}{p'}}\nonumber\\&
=\frac{G_{1}^{\frac{p(1+\alpha p')}{p'}}(r)}{(1+\alpha p')^{\frac{p}{p'}}}\nonumber,
\end{align}
where $G_{1}(r)=\int_{0}^{r}\tilde{H}_{2}(\rho)d\rho$. By using this fact and reverse H\"{o}lder's inequality with $\frac{p}{q}+\frac{q-p}{q}=1$, we obtain
\begin{align*}
B&\geq\int_{0}^{\infty}\left(\int_{r}^{\infty}H_{3}(s)ds\right)U(r)\left(\int_{r}^{\infty}H_{1}(s)ds\right)^{q-p}\left(\int_{r}^{\infty}H_{2}(s)ds\right)^{\frac{p}{p'}}dr\\&
\stackrel{\eqref{2.11*}}=\frac{1}{(1+\alpha p')^{\frac{p}{p'}}}\int_{0}^{\infty}\left(\int_{r}^{\infty}H_{3}(s)ds\right)U(r)\left(\int_{r}^{\infty}H_{1}(s)ds\right)^{q-p}G_{1}^{\frac{p(1+\alpha p')}{p'}}(r)dr\\&
=\frac{1}{(1+\alpha p')^{\frac{p}{p'}}}\int_{0}^{\infty}U^{\frac{p}{q}}(r)\left(\int_{r}^{\infty}H_{3}(s)ds\right)G_{1}^{\frac{p(1+\alpha p')}{p'}}(r)\left(\int_{r}^{\infty}H_{1}(s)ds\right)^{q-p}U^{\frac{q-p}{q}}dr\\&
\geq\frac{1}{(1+\alpha p')^{\frac{p}{p'}}}\left(\int_{0}^{\infty}\left(\int_{r}^{\infty}H_{3}(s)ds\right)^{\frac{q}{p}}U(r)G_{1}^{\frac{q(1+\alpha p')}{p'}}(r)dr\right)^{\frac{p}{q}}\\&
\times\left(\int_{0}^{\infty}\left(\int_{r}^{\infty}H_{1}(s)ds\right)^{q}U(r)dr\right)^{\frac{q-p}{q}}\\&
=\frac{1}{(1+\alpha p')^{\frac{p}{p'}}}\left(\int_{0}^{\infty}U(r)\left(\int_{r}^{\infty}H_{3}(s)ds\right)^{\frac{q}{p}}G_{1}^{\frac{q(1+\alpha p')}{p'}}(r)dr\right)^{\frac{p}{q}}\\&
\times\left(\int_{\mathbb X}\left(\int_{\X\setminus B(a,|x|_{a})}g(y)z(y)dy\right)^{q}u(x)dx\right)^{\frac{q-p}{q}}\\&
=\frac{B^{\frac{q-p}{q}}}{(1+\alpha p')^{\frac{p}{p'}}}\left(\int_{0}^{\infty}U(r)\left(\int_{r}^{\infty}H_{3}(s)ds\right)^{\frac{q}{p}}G_{1}^{\frac{q(1+\alpha p')}{p'}}(r)dr\right)^{\frac{p}{q}}.
\end{align*}
Therefore,
\begin{equation}
B^{\frac{p}{q}}\geq\frac{1}{(1+\alpha p')^{\frac{p}{p'}}}\left(\int_{0}^{\infty}U(r)\left(\int_{r}^{\infty}H_{3}(s)ds\right)^{\frac{q}{p}}G_{1}^{\frac{q(1+\alpha p')}{p'}}(r)dr\right)^{\frac{p}{q}}.
\end{equation}
By using reverse Minkowski inequality with exponent $\frac{q}{p}<0$ , so that we obtain
\begin{align*}
&\left(\int_{0}^{\infty}U(r)\left(\int_{r}^{\infty}H_{3}(s)ds\right)^{\frac{q}{p}}G_{1}^{\frac{q(1+\alpha p')}{p'}}(r)dr\right)^{\frac{p}{q}}\\&
=\left(\int_{0}^{\infty}\left(\int_{r}^{\infty}U^{\frac{p}{q}}(r)H_{3}(s)G^{\frac{(1+\alpha p')p}{p'}}(r)ds\right)^{\frac{q}{p}}dr\right)^{\frac{p}{q}}\\&
=\left(\int_{0}^{\infty}\left(\int_{0}^{\infty}U^{\frac{p}{q}}(r)H_{3}(s)G_{1}^{\frac{(1+\alpha p')p}{p'}}(r)\chi_{\{r<s\}}ds\right)^{\frac{q}{p}}dr\right)^{\frac{p}{q}}\\&
\stackrel{\eqref{minkow}}\geq \int_{0}^{\infty}H_{3}(s)\left(\int_{0}^{s}U(r)G_{1}^{\frac{q(1+\alpha p')}{p'}}(r)dr\right)^{\frac{p}{q}}ds\\&
=\int_{\mathbb X}g^{p}(y)G^{-\alpha p}(y)\left(\int_{\mathbb X\setminus B(a,|y|_{a})}u(x)G^{\frac{q(1+\alpha p')}{p'}}(x)dx\right)^{\frac{p}{q}}dy\\&
\geq \widetilde{D}^{p}(\alpha)\int_{\mathbb X}g^{p}(y)dy,
\end{align*}
where $\widetilde{D}(\alpha):= \inf_{x\neq a}\widetilde{D}(x,\alpha)=\inf_{x\neq a}G^{-\alpha }(x)\left(\int_{B(a,|x|_{a})}u(y)G^{\frac{q(1+\alpha p')}{p'}}(y)dy\right)^{\frac{1}{q}}$ and $\chi$ is the cut-off function.
Then we obtain
\begin{align*}
B^{\frac{p}{q}}&=\left(\int_{\mathbb X}\left(\int_{\X \setminus B(a,|x|_{a})}f(y)dy\right)^{q}u(x)dx\right)^{\frac{p}{q}}\geq \frac{\widetilde{D}^{p}(\alpha)}{(1+\alpha p')^{\frac{p}{p'}}} \int_{\mathbb X}g^{p}(y)dy\\&
=\frac{\widetilde{D}^{p}(\alpha)}{(1+\alpha p')^{\frac{p}{p'}}} \int_{\mathbb X}f^{p}(y)v(y)dy.
\end{align*}

\textbf{Step 2.} Let us recall $D_{2}$, given in the following form:
\begin{equation}
0< D_{2}=\inf_{x\neq a}\left[\left(\int_{ B(a,|x|_{a})}u(x)dx\right)^{\frac{1}{q}}\left(\int_{\mathbb X\setminus B(a,|x|_{a})}v^{1-p'}(y)dy\right)^{\frac{1}{p'}}\right].
\end{equation}

Let us note a relation between $G$ and $G_{1}$,
\begin{align}
G(x)=\int_{\X \setminus B(a,|x|_{a})}v^{-\frac{p'}{p}}dx&=\int_{\X \setminus B(a,|x|_{a})}z^{p'}dx\nonumber\\&
=\int_{|x|_{a}}^{\infty}\int_{\sum_{r}}z^{p'}(r,\omega)\lambda(r,\omega)drd\omega\label{VV1*}\\&
=\int_{|x|_{a}}^{\infty}\tilde{H}_{2}(r)dr\nonumber\\&
=:G_{1}(|x|_{a})\nonumber.
\end{align}
For $|x|_{a}\leq |y|_{a}$, we have
$$G_{1}(|x|_{a})=\int_{|x|_{a}}^{\infty}\tilde{H}_{2}(r)dr\geq\int_{|y|_{a}}^{\infty}\tilde{H}_{2}(r)dr=G_{1}(|y|_{a}),$$
it means $G(x)\geq G(y)$.
From $\frac{q(1+\alpha p')}{p'}>0,$ we obtain
$$\int_{B(a,|x|_{a})}u(y)G^{\frac{q(1+\alpha p')}{p'}}(x)dy\geq \int_{B(a,|x|_{a})}u(y)G^{\frac{q(1+\alpha p')}{p'}}(y)dy,$$
and by using $q<0$, we have
\begin{equation}
\begin{split}
\widetilde{D}(x,\alpha)&=G^{-\alpha }(x)\left(\int_{B(a,|x|_{a})}u(y)G^{\frac{q(1+\alpha p')}{p'}}(y)dy\right)^{\frac{1}{q}}\\&
\geq G^{-\alpha }(x)\left(\int_{B(a,|x|_{a})}u(y)G^{\frac{q(1+\alpha p')}{p'}}(x)dy\right)^{\frac{1}{q}}\\&
=G^{-\alpha }(x)G^{\frac{(1+\alpha p')}{p'}}(x)\left(\int_{B(a,|x|_{a})}u(y)dy\right)^{\frac{1}{q}}\\&
=G^{\frac{1}{p'}}(x)\left(\int_{B(a,|x|_{a})}u(y)dy\right)^{\frac{1}{q}}\\&
\stackrel{1>(-\alpha p')^{-\frac{1}{q}}}\geq(-\alpha p')^{-\frac{1}{q}}G^{\frac{1}{p'}}(x)\left(\int_{B(a,|x|_{a})}u(y)dy\right)^{\frac{1}{q}}.
\end{split}
\end{equation}
Consequently,
$$\widetilde{D}(x,\alpha)\geq (-\alpha p')^{-\frac{1}{q}}D_{2},$$
it means $$\widetilde{D}(\alpha)\geq (-\alpha p')^{-\frac{1}{q}}D_{2}.$$
Finally, we obtain
$$B^{\frac{1}{q}}\geq\frac{D_{1}(-\alpha p')^{-\frac{1}{q}}}{(1+\alpha p')^{\frac{1}{p'}}} \left(\int_{\mathbb X}f^{p}(y)v(y)dy\right)^{\frac{1}{p}}.$$
Then, as in the previous case we have
$$\sup_{\alpha\in \left(0,-\frac{1}{p'}\right)}\frac{(-\alpha p')^{-\frac{1}{q}}}{(1+\alpha p')^{\frac{1}{p'}}}=\left(\frac{p'}{p'+q}\right)^{-\frac{1}{q}}\left(\frac{q}{p'+q}\right)^{-\frac{1}{p'}}.$$
Therefore, we have that the biggest constant satisfies
$$C(p,q)\geq \left(\frac{p'}{p'+q}\right)^{-\frac{1}{q}}\left(\frac{q}{p'+q}\right)^{-\frac{1}{p'}}D_{2}.$$

\textbf{Step 3.} Let us give a necessity condition of  inequality \eqref{har*}. By using \eqref{har*} and $f(x)=v^{-\frac{p'}{p}}(x)\chi_{(t,\infty)}(|x|_{a})$, where $\chi$ is cut-off function, we compute
\begin{equation}
\begin{split}
C(p,q)&\leq \left[\int_{\mathbb X}\left(\int_{\X\setminus B(a,|x|_{a})}f(y)dy\right)^{q}u(x)dx\right]^{\frac{1}{q}} \left(\int_{\mathbb X}f^{p}(y)v(y)dx\right)^{-\frac{1}{p}}\\&
= \left[\int_{\mathbb X}\left(\int_{\X\setminus B(a,|x|_{a})}f(y)dy\right)^{q}u(x)dx\right]^{\frac{1}{q}} \left(\int_{|x|_{a}\geq t}v^{-p'}(y)v(y)dx\right)^{-\frac{1}{p}}\\&
\stackrel{q<0}\leq\left[\int_{|x|_{a}\leq t}\left(\int_{|x|_{a}\geq t} v^{-\frac{p'}{p}}(y)dy\right)^{q}u(x)dx\right]^{\frac{1}{q}}\left(\int_{|x|_{a}\geq t}v^{-p'}(y)v(y)dx\right)^{-\frac{1}{p}}\\&
=\left[\int_{|x|_{a}\geq t}v^{1-p'}(y)dx\right]^{\frac{1}{p'}}\left[\int_{|x|_{a}\leq t}u(y)dy\right]^{\frac{1}{q}},
\end{split}
\end{equation}
 which gives $D_{2}\geq C(p,q)$.
\end{proof}

\section{Consequences}
In this section, we consider some consequences of the reverse integral Hardy inequality.
\subsection{Homogeneous Lie groups}
 Let us recall that a Lie group (on $\mathbb{R}^{n}$) $\mathbb{G}$ with the dilation
$$D_{\lambda}(x):=(\lambda^{\nu_{1}}x_{1},\ldots,\lambda^{\nu_{n}}x_{n}),\; \nu_{1},\ldots, \nu_{n}>0,\; D_{\lambda}:\mathbb{R}^{n}\rightarrow\mathbb{R}^{n},$$
which is an automorphism of the group $\mathbb{G}$ for each $\lambda>0,$
is called a {\em homogeneous (Lie) group}. For simplicity, throughout this paper we use the notation $\lambda x$ for the dilation $D_{\lambda}.$  The homogeneous dimension of the homogeneous group $\mathbb{G}$ is denoted by $Q:=\nu_{1}+\ldots+\nu_{n}.$
Also, in this note we denote a homogeneous quasi-norm on $\mathbb{G}$ by $|x|$, which
is a continuous non-negative function
\begin{equation}
\mathbb{G}\ni x\mapsto |x|\in[0,\infty),
\end{equation}
with the properties

\begin{itemize}
	\item[i)] $|x|=|x^{-1}|$ for all $x\in\mathbb{G}$,
	\item[ii)] $|\lambda x|=\lambda |x|$ for all $x\in \mathbb{G}$ and $\lambda>0$,
	\item[iii)] $|x|=0$ iff $x=0$.
\end{itemize}
Moreover, the following polarisation formula on homogeneous Lie groups will be used in our proofs:
there is a (unique)
positive Borel measure $\sigma$ on the
unit quasi-sphere
$
\S:=\{x\in \mathbb{G}:\,|x|=1\},
$
so that for every $f\in L^{1}(\mathbb{G})$ we have
\begin{equation}\label{EQ:polar}
\int_{\mathbb{G}}f(x)dx=\int_{0}^{\infty}
\int_{\S}f(ry)r^{Q-1}d\sigma(y)dr.
\end{equation}
We refer to \cite{FS1} for the original appearance of such groups, and to \cite{FR} for a recent comprehensive treatment.
Let us define the quasi-ball centered at $x$ with radius $r$ in the following form:
\begin{equation}
B(x,r):=\{x\in\mathbb{G}:|x^{-1}y|<r\}.
\end{equation}
Then we have the following reverse integral Hardy inequality on homogeneous Lie groups.
\begin{cor}
Let $\mathbb{G}$ be a homogeneous Lie group of homogeneous dimension $Q$ with a quasi-norm $|\cdot|$. Assume that $q<0$, $p\in(0,1)$ and $\alpha,\beta\in\mathbb{R}$. Then the reverse integral Hardy inequality
\begin{equation}\label{harhom}
\left[\int_{\mathbb{G}}\left(\int_{B(0,|x|)}f(y)dy\right)^{q}|x|^{\alpha}dx\right]^{\frac{1}{q}}\geq C\left(\int_{\mathbb{G}}f^{p}(x)|x|^{\beta}dx\right)^{\frac{1}{p}},
\end{equation}
holds for $C>0$ and for all non-negative measurable functions $f$, if only if
\begin{equation}
\alpha+Q<0,\,\,\, \beta(1-p')+Q>0\,\,\, \text{and}\,\,\, \frac{Q+\alpha}{q}+\frac{Q+\beta(1-p')}{p'}=0.
\end{equation}
Moreover, the biggest constant $C$ for \eqref{harhom} satisfies
\begin{equation}
\begin{split}
&\left(\frac{|\S|}{|\alpha+Q|}\right)^{\frac{1}{q}}\left(\frac{|\S|}{Q+\beta(1-p')}\right)^{\frac{1}{p'}}\geq C\\&
\geq\left(\frac{|\S|}{|\alpha+Q|}\right)^{\frac{1}{q}}\left(\frac{|\S|}{Q+\beta(1-p')}\right)^{\frac{1}{p'}}\left(\frac{p'}{p'+q}\right)^{-\frac{1}{q}}\left(\frac{q}{p'+q}\right)^{-\frac{1}{p'}},
\end{split}
\end{equation}
where $|\mathfrak{S}|$ is the area of unit sphere with respect to $|\cdot|$.
\end{cor}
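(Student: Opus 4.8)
The plan is to obtain this corollary as a direct specialization of the metric measure space reverse Hardy inequality established above, namely the equivalence of \eqref{har1} with the condition \eqref{D1} together with the two-sided constant estimate \eqref{C}. I would apply that result on $\mathbb{G}$ with the radial power weights $u(x)=|x|^{\alpha}$ and $v(x)=|x|^{\beta}$, using the polar decomposition \eqref{EQ:polar}, for which $\lambda(r,\omega)=r^{Q-1}$ and the base point is $a=0$. Since both weights are powers of $|x|$, each of the two integrals defining $D_{1}$ collapses to a one-dimensional radial integral against $r^{Q-1}\,dr$ multiplied by the total mass $|\mathfrak{S}|$ of the unit quasi-sphere, so the whole matter reduces to evaluating two monomial integrals and analysing the resulting infimum over the radius.

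Concretely, writing $R=|x|$, I would first compute $\int_{\mathbb{G}\setminus B(0,R)}|y|^{\alpha}\,dy=|\mathfrak{S}|\int_{R}^{\infty}r^{\alpha+Q-1}\,dr$, which converges at infinity precisely when $\alpha+Q<0$, in which case it equals $\tfrac{|\mathfrak{S}|}{|\alpha+Q|}R^{\alpha+Q}$. Likewise, since $v^{1-p'}=|x|^{\beta(1-p')}$, I would compute $\int_{B(0,R)}|y|^{\beta(1-p')}\,dy=|\mathfrak{S}|\int_{0}^{R}r^{\beta(1-p')+Q-1}\,dr$, which converges at the origin precisely when $\beta(1-p')+Q>0$, giving $\tfrac{|\mathfrak{S}|}{Q+\beta(1-p')}R^{Q+\beta(1-p')}$. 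If either convergence condition fails the corresponding integral is $+\infty$, and raising $+\infty$ to a negative power (note $\tfrac{1}{q}<0$ and $\tfrac{1}{p'}<0$ since $q<0$ and $p\in(0,1)$) yields $0$ in the sense of the Remark, which forces $D_{1}=0$. Thus the first two stated conditions are exactly the requirement that both factors in \eqref{D1} be finite and nonzero.

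Substituting into \eqref{D1}, the quantity inside the infimum becomes the pure monomial $\left(\tfrac{|\mathfrak{S}|}{|\alpha+Q|}\right)^{\frac{1}{q}}\left(\tfrac{|\mathfrak{S}|}{Q+\beta(1-p')}\right)^{\frac{1}{p'}}R^{\,\frac{\alpha+Q}{q}+\frac{Q+\beta(1-p')}{p'}}$. Taking the infimum over $R\in(0,\infty)$, I would observe that it is positive and finite if and only if the exponent of $R$ vanishes, that is $\tfrac{Q+\alpha}{q}+\tfrac{Q+\beta(1-p')}{p'}=0$; if the exponent is nonzero the monomial tends to $0$ at one of the two endpoints, again giving $D_{1}=0$. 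This recovers the third condition, and when all three hold the $R$-dependence cancels and $D_{1}=\left(\tfrac{|\mathfrak{S}|}{|\alpha+Q|}\right)^{\frac{1}{q}}\left(\tfrac{|\mathfrak{S}|}{Q+\beta(1-p')}\right)^{\frac{1}{p'}}$. Feeding this explicit value of $D_{1}$ into the bound \eqref{C} immediately produces the asserted two-sided estimate for the best constant $C$.

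The argument is thus essentially a single explicit computation, so I do not anticipate a genuine obstacle. The only points demanding care are the sign bookkeeping, since both $q<0$ and $p'<0$ make the exponents $\tfrac{1}{q}$ and $\tfrac{1}{p'}$ negative, which is exactly what converts a divergent integral into the value $0$ under the convention of the Remark and hence pins down the convergence conditions, and the observation that the entire $R$-dependence of $D_{1}$ collapses into a single power of $R$, so that the infimum is trivial to evaluate once the exponent is forced to be zero.
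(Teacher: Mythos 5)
Your proposal is correct and takes essentially the same route as the paper: specialize the metric measure space theorem at $a=0$ with $u(x)=|x|^{\alpha}$, $v(x)=|x|^{\beta}$, reduce $D_{1}$ via the polar decomposition \eqref{EQ:polar} to two monomial radial integrals, observe that the infimum of the resulting power of $R$ is positive exactly when the exponent vanishes, and feed the explicit value of $D_{1}$ into \eqref{C}. If anything, your treatment of the ``only if'' direction (divergent integrals raised to the negative powers $\tfrac{1}{q},\tfrac{1}{p'}$ giving $0$, hence $D_{1}=0$) is spelled out more explicitly than in the paper, which only records the sufficiency computation.
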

\begin{proof}
Let us check condition \eqref{D1} with $u(x)=|x|^{\alpha}$, $v(x)=|x|^{\beta}$ and with $a=0$. Let us calculate the first integral in \eqref{D1}:
\begin{equation}
\begin{split}
\int_{\mathbb{G}\setminus B(0,|x|)}u(y)dy&=\int_{\mathbb{G}\setminus B(0,|x|)}|y|^{\alpha}dy
\stackrel{\eqref{EQ:polar}}=\int_{|x|}^{\infty}\int_{\S}\rho^{\alpha}\rho^{Q-1}d\rho d\sigma(\omega)\\&
=|\S|\int_{|x|}^{\infty}\rho^{Q+\alpha-1}d\rho
\stackrel{Q+\alpha<0}=-\frac{|\S|}{Q+\alpha}|x|^{Q+\alpha}=\frac{|\S|}{|Q+\alpha|}|x|^{Q+\alpha},
\end{split}
\end{equation}
where $|\S|$ is the area of the unit quasi-sphere in $\mathbb{G}$.
Then,
\begin{equation}
\begin{split}
\int_{B(0,|x|)}v^{1-p'}(y)dy&=\int_{B(0,|x|)}|y|^{\beta(1-p')}dy
\stackrel{\eqref{EQ:polar}}=\int_{0}^{|x|}\int_{\S}\rho^{\beta(1-p')}\rho^{Q-1}d\rho d\sigma(\omega)\\&
=|\S|\int_{0}^{|x|}\rho^{Q+\beta(1-p')-1}d\rho\\&
\stackrel{Q+\beta(1-p')>0}=\frac{|\S|}{Q+\beta(1-p')}|x|^{Q+\beta(1-p')}.
\end{split}
\end{equation}
Finally by summarising above facts with $\frac{Q+\alpha}{q}+\frac{Q+\beta(1-p')}{p'}=0$, we have
\begin{equation}
\begin{split}
D_{1}&=\left(\frac{|\S|}{|\alpha+Q|}\right)^{\frac{1}{q}}\left(\frac{|\S|}{Q+\beta(1-p')}\right)^{\frac{1}{p'}}\inf_{r>0}r^{\frac{Q+\alpha}{q}+\frac{Q+\beta(1-p')}{p'}}\\&
=\left(\frac{|\S|}{|\alpha+Q|}\right)^{\frac{1}{q}}\left(\frac{|\S|}{Q+\beta(1-p')}\right)^{\frac{1}{p'}}>0.
\end{split}
\end{equation}
Then by using \eqref{C}, we obtain
\begin{equation}
\begin{split}
&\left(\frac{|\S|}{|\alpha+Q|}\right)^{\frac{1}{q}}\left(\frac{|\S|}{Q+\beta(1-p')}\right)^{\frac{1}{p'}}\geq C\\&
\geq\left(\frac{|\S|}{|\alpha+Q|}\right)^{\frac{1}{q}}\left(\frac{|\S|}{Q+\beta(1-p')}\right)^{\frac{1}{p'}}\left(\frac{p'}{p'+q}\right)^{-\frac{1}{q}}\left(\frac{q}{p'+q}\right)^{-\frac{1}{p'}},
\end{split}
\end{equation}
completing the proof.
\end{proof}
Then we have conjugate reverse integral Hardy inequality on homogeneous Lie groups.
\begin{cor}
Let $\mathbb{G}$ be a homogeneous Lie group of homogeneous dimension $Q$ with a quasi-norm $|\cdot|$. Assume that $q<0$, $p\in(0,1)$ and $\alpha,\beta\in\mathbb{R}$. Then the conjugate reverse integral Hardy inequality
\begin{equation}\label{harhom*}
\left[\int_{\mathbb{G}}\left(\int_{\mathbb{G}\setminus B(0,|x|)}f(y)dy\right)^{q}|x|^{\alpha}dx\right]^{\frac{1}{q}}\geq C \left(\int_{\mathbb{G}}f^{p}(x)|x|^{\beta}dx\right)^{\frac{1}{p}},
\end{equation}
holds for $C>0$ and for all non-negative measurable functions $f$, if only if
\begin{equation}
\alpha+Q>0,\,\,\, \beta(1-p')+Q<0\,\,\, \text{and}\,\,\, \frac{Q+\alpha}{q}+\frac{Q+\beta(1-p')}{p'}=0.
\end{equation}
 Moreover, the biggest constant $C$ for \eqref{harhom*} satisfies
\begin{equation}
\begin{split}
&\left(\frac{|\S|}{\alpha+Q}\right)^{\frac{1}{q}}\left(\frac{|\S|}{|Q+\beta(1-p')|}\right)^{\frac{1}{p'}}\geq C\\&
\geq\left(\frac{|\S|}{\alpha+Q}\right)^{\frac{1}{q}}\left(\frac{|\S|}{|Q+\beta(1-p')|}\right)^{\frac{1}{p'}}\left(\frac{p'}{p'+q}\right)^{-\frac{1}{q}}\left(\frac{q}{p'+q}\right)^{-\frac{1}{p'}},
\end{split}
\end{equation}
where $|\mathfrak{S}|$ is the area of unit sphere with respect to $|\cdot|$.
\end{cor}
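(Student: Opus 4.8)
The plan is to deduce this corollary directly from the conjugate reverse integral Hardy inequality proved in the preceding theorem (the one governed by the constant $D_{2}$), applied with the explicit choices $u(x)=|x|^{\alpha}$, $v(x)=|x|^{\beta}$, and centre $a=0$. The whole task reduces to evaluating $D_{2}$ for power weights, which is possible in closed form because the polar decomposition \eqref{EQ:polar} on $\G$ turns every radial integral into a one-dimensional integral in $\rho$.

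First I would compute the two integrals entering \eqref{D2}. Using \eqref{EQ:polar},
$$\int_{B(0,|x|)}u(y)\,dy=|\S|\int_{0}^{|x|}\rho^{Q+\alpha-1}\,d\rho,$$
which converges at the origin precisely when $Q+\alpha>0$, giving $\frac{|\S|}{Q+\alpha}|x|^{Q+\alpha}$; and likewise
$$\int_{\G\setminus B(0,|x|)}v^{1-p'}(y)\,dy=|\S|\int_{|x|}^{\infty}\rho^{Q+\beta(1-p')-1}\,d\rho,$$
which converges at infinity precisely when $Q+\beta(1-p')<0$, giving $\frac{|\S|}{|Q+\beta(1-p')|}|x|^{Q+\beta(1-p')}$. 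Assembling these, and writing $r=|x|$, one factors out the $r$-dependence to obtain
$$D_{2}=\left(\frac{|\S|}{\alpha+Q}\right)^{\frac{1}{q}}\left(\frac{|\S|}{|Q+\beta(1-p')|}\right)^{\frac{1}{p'}}\inf_{r>0}r^{\frac{Q+\alpha}{q}+\frac{Q+\beta(1-p')}{p'}}.$$

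The decisive observation is that $\inf_{r>0}r^{e}$ is strictly positive, and in fact equals $1$, if and only if the exponent $e=\frac{Q+\alpha}{q}+\frac{Q+\beta(1-p')}{p'}$ vanishes; for $e\neq0$ the infimum over $(0,\infty)$ is $0$, forcing $D_{2}=0$. This simultaneously yields the scaling identity $\frac{Q+\alpha}{q}+\frac{Q+\beta(1-p')}{p'}=0$ and the value $D_{2}=\big(\tfrac{|\S|}{\alpha+Q}\big)^{1/q}\big(\tfrac{|\S|}{|Q+\beta(1-p')|}\big)^{1/p'}>0$ under the three stated conditions. Since by the preceding theorem the positivity $D_{2}>0$ is equivalent to the validity of \eqref{harhom*}, and since the two-sided constant estimate \eqref{C} holds in its $D_{2}$-form, substituting the explicit $D_{2}$ into that estimate produces the asserted bounds on the best constant $C$, completing the argument.

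I expect the only delicate point to be the necessity direction. One must verify that if either sign condition fails the corresponding integral diverges, so that—by the conventions of the Remark, namely $(+\infty)^{1/q}=(+\infty)^{1/p'}=0$ because $q,p'<0$—the product collapses and $D_{2}=0$; and that the homogeneity argument above genuinely forces the remaining equality rather than merely being sufficient. Everything else is a routine substitution into the general theorem.
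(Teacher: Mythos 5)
Your proposal is correct and follows exactly the route the paper intends: the paper's own proof of this corollary is just the remark that it is ``similar to the previous case,'' namely substituting the power weights $u(x)=|x|^{\alpha}$, $v(x)=|x|^{\beta}$ into the condition \eqref{D2} of the conjugate theorem, evaluating both radial integrals via the polar decomposition \eqref{EQ:polar} (which yields the sign conditions $\alpha+Q>0$ and $\beta(1-p')+Q<0$ for convergence), observing that $\inf_{r>0}r^{e}>0$ forces the scaling identity $e=\frac{Q+\alpha}{q}+\frac{Q+\beta(1-p')}{p'}=0$, and then feeding the explicit value of $D_{2}$ into the two-sided constant estimate. Your handling of the necessity direction via the divergence conventions of the Remark is also consistent with the paper's framework, so there is nothing to correct.
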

\begin{proof}
Proof of this corollary is similar to the previous case.
\end{proof}
\subsection{Hyperbolic space}
Let $\mathbb{H}^{n}$ be the hyperbolic space of dimension $n$ and let $a\in\mathbb{H}^{n}$. Let us set
\begin{equation}
u(x)=(\sinh |x|_{a})^{\alpha},\,\,\,\,\,v(x)=(\sinh |x|_{a})^{\beta}.
\end{equation}
Then we have the following result of this subsection.
\begin{cor}
Let $\H$ be the hyperbolic space of dimension $n$ and let $a\in\H$. Assume that $q<0$, $p\in(0,1)$ and $\alpha,\beta\in\mathbb{R}$. Then the reverse integral Hardy inequality
\begin{equation}
\left[\int_{\H}\left(\int_{B(a,|x|_{a})}f(y)dy\right)^{q}(\sinh |x|_{a})^{\alpha}dx\right]^{\frac{1}{q}}\geq C\left(\int_{\H}f^{p}(x)(\sinh |x|_{a})^{\beta}dx\right)^{\frac{1}{p}},
\end{equation}
holds for $C>0$ and for all non-negative measurable functions $f,$ if
\begin{equation}
0\leq\alpha+n<1,\,\,\, \beta(1-p')+n>0\,\,\, \text{and}\,\,\, \frac{\alpha+n}{q}+\frac{\beta(1-p')+n}{p'}\geq\frac{1}{q}+\frac{1}{p'}.
\end{equation}
\end{cor}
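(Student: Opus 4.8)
The plan is to derive this corollary directly from the reverse integral Hardy inequality on metric measure spaces, i.e.\ the theorem containing \eqref{har1}, applied to $\mathbb X=\H$ with the weights $u(x)=(\sinh|x|_{a})^{\alpha}$, $v(x)=(\sinh|x|_{a})^{\beta}$ and the polar decomposition \eqref{EQ:polarintro} in which $\lambda(r,\omega)=(\sinh r)^{n-1}$. Since the corollary asserts only the \emph{sufficiency} (``if'') direction, it is enough to show that the quantity $D_{1}$ defined in \eqref{D1} satisfies $0<D_{1}<\infty$ under the three stated hypotheses; the claimed inequality with a positive constant then follows at once from the best-constant bound \eqref{C}, because the prefactor $\left(\frac{p'}{p'+q}\right)^{-1/q}\left(\frac{q}{p'+q}\right)^{-1/p'}$ is positive when $p',q<0$.

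First I would reduce $D_{1}$ to a one-dimensional infimum. Writing $t=|x|_{a}$ and inserting $\lambda(r,\omega)=(\sinh r)^{n-1}$ into \eqref{EQ:polarintro}, the two integrals in \eqref{D1} become
\[
\int_{\H\setminus B(a,t)}u(y)\,dy=|\mathbb{S}^{n-1}|\int_{t}^{\infty}(\sinh r)^{\alpha+n-1}\,dr=:I_{1}(t),
\]
\[
\int_{B(a,t)}v^{1-p'}(y)\,dy=|\mathbb{S}^{n-1}|\int_{0}^{t}(\sinh r)^{\beta(1-p')+n-1}\,dr=:I_{2}(t),
\]
so that $D_{1}=\inf_{t>0}I_{1}(t)^{1/q}I_{2}(t)^{1/p'}=:\inf_{t>0}P(t)$. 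The hypothesis $\alpha+n<1$ ensures $I_{1}(t)<\infty$ for each $t>0$, since $(\sinh r)^{\alpha+n-1}\sim(\tfrac12 e^{r})^{\alpha+n-1}$ decays exponentially at infinity; the hypothesis $\beta(1-p')+n>0$ ensures $I_{2}(t)<\infty$, since near the origin $(\sinh r)^{\beta(1-p')+n-1}\sim r^{\beta(1-p')+n-1}$ is integrable. Hence $I_{1},I_{2}$ are positive, finite and continuous on $(0,\infty)$, and so is $P$; the only question is whether $P$ stays bounded away from zero at the two endpoints.

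The heart of the argument is this endpoint analysis, keeping in mind that $1/q<0$ and $1/p'<0$. As $t\to0^{+}$ we have $I_{2}(t)\sim |\mathbb{S}^{n-1}|\,t^{\beta(1-p')+n}/(\beta(1-p')+n)\to0$, hence $I_{2}(t)^{1/p'}\to+\infty$, while $I_{1}(t)$ tends either to a finite positive constant (if $\alpha+n>0$) or diverges only logarithmically (if $\alpha+n=0$); comparing the polynomial blow-up of $I_{2}^{1/p'}$ against the behaviour of $I_{1}^{1/q}$ gives $P(t)\to+\infty$. As $t\to\infty$ one uses $\sinh r\sim\tfrac12 e^{r}$ to get $I_{1}(t)\sim c_{1}e^{(\alpha+n-1)t}$, while $I_{2}(t)$ grows like $e^{(\beta(1-p')+n-1)t}$ when $\beta(1-p')+n>1$ (and is bounded, respectively linear, otherwise); thus $P(t)$ behaves like $e^{\theta t}$ with $\theta=\frac{\alpha+n-1}{q}+\frac{\beta(1-p')+n-1}{p'}$. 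The third hypothesis $\frac{\alpha+n}{q}+\frac{\beta(1-p')+n}{p'}\ge\frac{1}{q}+\frac{1}{p'}$ is exactly $\theta\ge0$, which forces $\liminf_{t\to\infty}P(t)>0$. Combined with continuity and positivity on compact subintervals, this yields $0<D_{1}<\infty$, and the main theorem completes the proof. The main obstacle is precisely the regime $t\to\infty$: both factors $I_{1}^{1/q}$ and $I_{2}^{1/p'}$ carry negative exponents and pull in opposite directions, so that only the sharp exponential balance encoded in the third condition prevents the product from collapsing to zero.
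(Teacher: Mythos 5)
Your proposal is correct and follows essentially the same route as the paper: reduce $D_{1}$ to the one-dimensional infimum of $I_{1}(t)^{1/q}I_{2}(t)^{1/p'}$ via the polar decomposition with $\lambda(r,\omega)=(\sinh r)^{n-1}$, then analyse the regimes $t\to 0^{+}$ (where $\sinh\rho\approx\rho$) and $t\to\infty$ (where $\sinh\rho\approx e^{\rho}$), identifying the third hypothesis as exactly the exponential balance $\frac{\alpha+n-1}{q}+\frac{\beta(1-p')+n-1}{p'}\geq 0$. If anything, your treatment of the borderline cases ($\alpha+n=0$ giving a logarithmic divergence, and $\beta(1-p')+n-1\leq 0$ giving a bounded or linear $I_{2}$) is spelled out more explicitly than in the paper's own argument.
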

\begin{proof}
Let us check condition \eqref{D1}. By using polar decomposition for the hyperbolic space, we have
\begin{equation}\label{integ}
D_{1}=\inf_{x\neq a}\left(\int_{|x|_{a}}^{\infty}(\sinh \rho)^{\alpha+n-1}d\rho\right)^{\frac{1}{q}}\left(\int_{0}^{|x|_{a}}(\sinh \rho)^{\beta(1-p')+n-1}d\rho\right)^{\frac{1}{p'}}.
\end{equation}
If $\alpha+n<1$ and  $\beta(1-p')+n>0$, then \eqref{integ} is integrable. Let us check the finiteness and positiveness of the infimum \eqref{integ}. Let us divide the proof in two cases.

First case, $|x|_{a}\gg1$. Then $\sinh |x|_{a}\approx \exp |x|_{a}$ if $|x|_{a}\gg1$. Then we obtain,
\begin{equation}
\begin{split}
D^{1}_{1}&=\inf_{|x|_{a}\gg1}\left(\int_{|x|_{a}}^{\infty}(\sinh \rho)^{\alpha+n-1}d\rho\right)^{\frac{1}{q}}\left(\int_{0}^{|x|_{a}}(\sinh \rho)^{\beta(1-p')+n-1}d\rho\right)^{\frac{1}{p'}}\\&
\simeq \inf_{|x|_{a}\gg1}\left(\int_{|x|_{a}}^{\infty}(\exp \rho)^{\alpha+n-1}d\rho\right)^{\frac{1}{q}}\left(\int_{0}^{|x|_{a}}(\exp \rho)^{\beta(1-p')+n-1}d\rho\right)^{\frac{1}{p'}}\\&
=\inf_{|x|_{a}\gg1}\left((\exp|x|_{a})^{\alpha+n-1}\right)^{\frac{1}{q}}\left((\exp|x|_{a})^{\beta(1-p')+n-1}\right)^{\frac{1}{p'}}\\&
=\inf_{|x|_{a}\gg1}(\exp|x|_{a})^{\frac{\alpha+n-1}{q}+\frac{\beta(1-p')+n-1}{p'}},
\end{split}
\end{equation}
infimum of the last term is positive, if only if $\frac{\alpha+n-1}{q}+\frac{\beta(1-p')+n-1}{p'}\geq0$, i.e., $\frac{\alpha+n}{q}+\frac{\beta(1-p')+n}{p'}\geq\frac{1}{q}+\frac{1}{p'}$,  then $D^{1}_{1}>0$.

Let us consider the another case $|x|_{a}\ll1$. For $|x|_{a}\ll1$ we have $\sinh \rho_{\{0\leq\rho<|x|_{a} \}}\approx\rho,$ then we calculate
\begin{equation}
\begin{split}
\inf_{|x|_{a}\ll1}&\left(\int_{|x|_{a}}^{\infty}(\sinh \rho)^{\alpha+n-1}d\rho\right)^{\frac{1}{q}}\left(\int^{|x|_{a}}_{0}(\sinh \rho)^{\beta(1-p')+n-1}d\rho\right)^{\frac{1}{p'}}\\&
\simeq\inf_{|x|_{a}\ll1}\left(\int_{|x|_{a}}^{R}(\sinh \rho)^{\alpha+n-1}d\rho+\int_{R}^{\infty}(\sinh \rho)^{\alpha+n-1}d\rho\right)^{\frac{1}{q}}\left(\int^{|x|_{a}}_{0}\rho^{\beta(1-p')+n-1}d\rho\right)^{\frac{1}{p'}}\\&
\simeq\inf_{|x|_{a}\ll1}\left(\int_{|x|_{a}}^{R}(\sinh \rho)^{\alpha+n-1}d\rho+\int_{R}^{\infty}(\sinh \rho)^{\alpha+n-1}d\rho\right)^{\frac{1}{q}}|x|_{a}^{\frac{\beta(1-p')+n}{p'}}.
\end{split}
\end{equation}
Similarly, for small $R$ we have $\sinh \rho_{\{|x|_{a}\leq\rho<R \}}\approx\rho,$ so that we obtain
\begin{equation}
\begin{split}
\inf_{|x|_{a}\ll1}&\left(\int_{|x|_{a}}^{\infty}(\sinh \rho)^{\alpha+n-1}d\rho\right)^{\frac{1}{q}}\left(\int^{|x|_{a}}_{0}(\sinh \rho)^{\beta(1-p')+n-1}d\rho\right)^{\frac{1}{p'}}\\&
\simeq\inf_{|x|_{a}\ll1}\left(\int_{|x|_{a}}^{R}(\sinh \rho)^{\alpha+n-1}d\rho+\int_{R}^{\infty}(\sinh \rho)^{\alpha+n-1}d\rho\right)^{\frac{1}{q}}|x|_{a}^{\frac{\beta(1-p')+n}{p'}}\\&
\simeq \inf_{|x|_{a}\ll1}\left(\int_{|x|_{a}}^{R} \rho^{\alpha+n-1}d\rho+\int_{R}^{\infty}(\sinh \rho)^{\alpha+n-1}d\rho\right)^{\frac{1}{q}}|x|_{a}^{\frac{\beta(1-p')+n}{p'}}\\&
\simeq \inf_{|x|_{a}\ll1}\left( |x|_{a}^{\alpha+n}+C_{R}\right)^{\frac{1}{q}}|x|_{a}^{\frac{\beta(1-p')+n}{p'}}.
\end{split}
\end{equation}
If $\alpha+n\geq0$, we have $\frac{\alpha+n}{q}\leq0$, then we have
\begin{equation}
\begin{split}
D^{2}_{1}&=\inf_{|x|_{a}\ll1}\left(\int_{|x|_{a}}^{\infty}(\sinh \rho)^{\alpha+n-1}d\rho\right)^{\frac{1}{q}}\left(\int^{|x|_{a}}_{0}(\sinh \rho)^{\beta(1-p')+n-1}d\rho\right)^{\frac{1}{p'}}\\&
\simeq \inf_{|x|_{a}\ll1}\left( |x|_{a}^{\alpha+n}+C_{R}\right)^{\frac{1}{q}}|x|_{a}^{\frac{\beta(1-p')+n}{p'}}\\&
\simeq \inf_{|x|_{a}\ll1}|x|_{a}^{\frac{\beta(1-p')+n}{p'}}>0,
\end{split}
\end{equation}
and infimum is positive, if only if $\frac{\beta(1-p')+n}{p'}<0$, i.e., $\beta(1-p')+n>0$.
\end{proof}
Let us give the reverse conjugate integral Hardy's inequality in hyperbolic spaces:
\begin{cor}
Let $\H$ be the hyperbolic space of  dimension $n$ and $a\in \H$. Assume that $q<0$, $p\in(0,1)$ and let $\alpha,\beta\in\mathbb{R}$. Then the reverse conjugate integral Hardy inequality
\begin{equation}
\left[\int_{\H}\left(\int_{\mathbb X\setminus B(a,|x|_{a})}f(y)dy\right)^{q}(\sinh |x|_{a})^{\alpha}dx\right]^{\frac{1}{q}}\geq C\left(\int_{\H}f^{p}(x)(\sinh |x|_{a})^{\beta}dx\right)^{\frac{1}{p}},
\end{equation}
holds for all non-negative measurable functions $f,$ if
$$\alpha+n>0,\,\,\,1>\beta(1-p')+n\geq0\,\,\, \text{and}\,\,\, \frac{\alpha+n}{q}+\frac{\beta(1-p')+n}{p'}\geq\frac{1}{q}+\frac{1}{p'}.$$
\end{cor}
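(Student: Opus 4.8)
The plan is to deduce this from the conjugate reverse Hardy inequality \eqref{har*}, by verifying that its sufficient condition \eqref{D2} holds, i.e.\ that $D_{2}>0$, for the weights $u(x)=(\sinh|x|_{a})^{\alpha}$ and $v(x)=(\sinh|x|_{a})^{\beta}$. Using the polar decomposition for $\H$ with $\lambda(r,\omega)=(\sinh r)^{n-1}$ and integrating out the spherical variable, one reduces $D_{2}$ to the one-dimensional quantity
\begin{equation*}
D_{2}=|\mathbb{S}^{n-1}|^{\frac{1}{q}+\frac{1}{p'}}\inf_{x\neq a}\left(\int_{0}^{|x|_{a}}(\sinh\rho)^{\alpha+n-1}d\rho\right)^{\frac{1}{q}}\left(\int_{|x|_{a}}^{\infty}(\sinh\rho)^{\beta(1-p')+n-1}d\rho\right)^{\frac{1}{p'}},
\end{equation*}
so the whole problem becomes the finiteness and strict positivity of this radial infimum. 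Note the roles of the two integrals are swapped relative to the previous corollary: here $u$ is integrated over the \emph{inner} region and $v^{1-p'}$ over the \emph{outer} region.

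First I would settle integrability. Near $\rho=0$ one has $\sinh\rho\approx\rho$, so the inner integrand $(\sinh\rho)^{\alpha+n-1}$ is integrable precisely when $\alpha+n>0$; near $\rho=\infty$ one has $\sinh\rho\approx\frac{1}{2}e^{\rho}$, so the outer integrand $(\sinh\rho)^{\beta(1-p')+n-1}$ is integrable at infinity precisely when $\beta(1-p')+n<1$. Both are guaranteed by the hypotheses, so $D_{2}$ is at least finite; the content of the statement is its positivity, which I would establish by analysing the infimum in the two regimes $|x|_{a}\ll1$ and $|x|_{a}\gg1$ separately, exactly as in the previous corollary.

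For $|x|_{a}\ll1$ I would use $\sinh\rho\approx\rho$ to get $\int_{0}^{|x|_{a}}(\sinh\rho)^{\alpha+n-1}d\rho\approx|x|_{a}^{\alpha+n}$, while splitting $\int_{|x|_{a}}^{\infty}=\int_{|x|_{a}}^{R}+\int_{R}^{\infty}$ shows the outer integral tends either to a finite positive constant (if $\beta(1-p')+n>0$) or, in the borderline case $\beta(1-p')+n=0$, grows only like $\log(1/|x|_{a})$. Since $q<0$ gives $(\alpha+n)/q<0$, the factor $|x|_{a}^{(\alpha+n)/q}$ blows up as $|x|_{a}\to0$ and dominates the at-most-logarithmic (and, since $p'<0$, vanishing) contribution of the outer factor, so the infimum over small radii is strictly positive. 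This is exactly where the hypothesis $\beta(1-p')+n\geq0$ is used.

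The main obstacle is the regime $|x|_{a}\gg1$, where — unlike the non-conjugate case — the exponent $\alpha+n$ carries no upper bound and so the inner integral can itself grow exponentially. Using $\sinh\rho\approx\frac{1}{2}e^{\rho}$ I would reduce the product to a power $(e^{|x|_{a}})^{E}$ with
\begin{equation*}
E=\frac{\alpha+n-1}{q}+\frac{\beta(1-p')+n-1}{p'}
\end{equation*}
in the subcase $\alpha+n>1$ (the inner integral being dominated by its upper endpoint), while for $\alpha+n\le1$ the inner integral stays bounded and the outer factor alone, carrying the positive exponent $\frac{\beta(1-p')+n-1}{p'}>0$, forces the product to $+\infty$. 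Thus the infimum over large radii is positive exactly when $E\ge0$, i.e.\ when $\frac{\alpha+n}{q}+\frac{\beta(1-p')+n}{p'}\ge\frac{1}{q}+\frac{1}{p'}$ — precisely the third hypothesis, which holds automatically in the subcase $\alpha+n\le1$ and becomes the genuinely binding constraint when $\alpha+n>1$. Combining the two regimes yields $D_{2}>0$, and the claimed inequality then follows at once from the conjugate reverse Hardy inequality \eqref{har*}.
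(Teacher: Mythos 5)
Your proposal is correct and follows essentially the same route as the paper: it verifies positivity of $D_{2}$ from the conjugate reverse Hardy theorem via the hyperbolic polar decomposition, and then analyses the radial infimum separately in the regimes $|x|_{a}\ll1$ and $|x|_{a}\gg1$. In fact your execution is more careful than the paper's, which (apparently by copy-paste from the previous corollary) swaps the two integrals in the large-radius computation and glosses over both the borderline logarithmic case $\beta(1-p')+n=0$ and the distinction between $\alpha+n>1$ and $\alpha+n\leq1$ for the growth of the inner integral --- points you handle correctly.
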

\begin{proof}
Similarly to the previous case, check condition \eqref{D2} and then, we have
\begin{equation}\label{integ*}
D_{2}=\inf_{x\neq a}\left(\int_{0}^{|x|_{a}}(\sinh \rho)^{\alpha+n-1}d\rho\right)^{\frac{1}{q}}\left(\int_{|x|_{a}}^{\infty}(\sinh \rho)^{\beta(1-p')+n-1}d\rho\right)^{\frac{1}{p'}}.
\end{equation}
If $\alpha+n>0$ and  $\beta(1-p')+n<1$, then \eqref{integ*} is integrable.
If $|x|_{a}\gg1$, we obtain,
\begin{equation}
\begin{split}
D^{1}_{2}&=\inf_{|x|_{a}\gg1}\left(\int_{|x|_{a}}^{\infty}(\sinh \rho)^{\alpha+n-1}d\rho\right)^{\frac{1}{q}}\left(\int_{0}^{|x|_{a}}(\sinh \rho)^{\beta(1-p')+n-1}d\rho\right)^{\frac{1}{p'}}\\&
\simeq \inf_{|x|_{a}\gg1}\left(\int_{|x|_{a}}^{\infty}(\exp \rho)^{\alpha+n-1}d\rho\right)^{\frac{1}{q}}\left(\int_{0}^{|x|_{a}}(\exp \rho)^{\beta(1-p')+n-1}d\rho\right)^{\frac{1}{p'}}\\&
=\inf_{|x|_{a}\gg1}\left((\exp|x|_{a})^{\alpha+n-1}\right)^{\frac{1}{q}}\left((\exp|x|_{a})^{\beta(1-p')+n-1}\right)^{\frac{1}{p'}}\\&
=\inf_{|x|_{a}\gg1}(\exp|x|_{a})^{\frac{\alpha+n-1}{q}+\frac{\beta(1-p')+n-1}{p'}},
\end{split}
\end{equation}
infimum of the last term is positive, if only if $\frac{\alpha+n-1}{q}+\frac{\beta(1-p')+n-1}{p'}\geq0$, i.e., $\frac{\alpha+n}{q}+\frac{\beta(1-p')+n}{p'}\geq\frac{1}{q}+\frac{1}{p'}$,  then $D^{1}_{1}>0$.

If $|x|_{a}\ll1$, we obtain
\begin{equation}
\begin{split}
\inf_{|x|_{a}\ll1}&\left(\int_{0}^{|x|_{a}}(\sinh \rho)^{\alpha+n-1}d\rho\right)^{\frac{1}{q}}\left(\int^{\infty}_{|x|_{a}}(\sinh \rho)^{\beta(1-p')+n-1}d\rho\right)^{\frac{1}{p'}}\\&
\simeq\inf_{|x|_{a}\ll1}\left(\int_{0}^{|x|_{a}}\rho^{\alpha+n-1}d\rho\right)^{\frac{1}{q}} \left(\int_{|x|_{a}}^{R}(\sinh \rho)^{\beta(1-p')+n-1}d\rho +\int_{R}^{\infty}(\sinh \rho)^{\beta(1-p')+n-1}d\rho\right)^{\frac{1}{p'}}\\&
\simeq\inf_{|x|_{a}\ll1}\left(\int_{|x|_{a}}^{R}(\sinh \rho)^{\beta(1-p')+n-1}d\rho +\int_{R}^{\infty}(\sinh \rho)^{\beta(1-p')+n-1}d\rho\right)^{\frac{1}{p'}}|x|_{a}^{\frac{\alpha+n}{q}}.
\end{split}
\end{equation}
Similarly, for small $R$ we have $\sinh \rho_{\{|x|_{a}\leq\rho<R \}}\approx\rho,$ so that we obtain
\begin{equation}
\begin{split}
\inf_{|x|_{a}\ll1}&\left(\int_{0}^{|x|_{a}}(\sinh \rho)^{\alpha+n-1}d\rho\right)^{\frac{1}{q}}\left(\int^{\infty}_{|x|_{a}}(\sinh \rho)^{\beta(1-p')+n-1}d\rho\right)^{\frac{1}{p'}}\\&
\simeq \inf_{|x|_{a}\ll1}\left(\int_{|x|_{a}}^{R}(\sinh \rho)^{\beta(1-p')+n-1}d\rho +\int_{R}^{\infty}(\sinh \rho)^{\beta(1-p')+n-1}d\rho\right)^{\frac{1}{p'}}|x|_{a}^{\frac{\alpha+n}{q}}\\&
\simeq \inf_{|x|_{a}\ll1}\left( |x|_{a}^{\beta(1-p')+n}+C'_{R}\right)^{\frac{1}{q}}|x|_{a}^{\frac{\alpha+n}{q}}.
\end{split}
\end{equation}
If $\beta(1-p')+n\geq0$, we have $\frac{\beta(1-p')+n}{q}\leq0$, then we have
\begin{equation}
\begin{split}
D^{2}_{2}&=\inf_{|x|_{a}\ll1}\left(\int_{0}^{|x|_{a}}(\sinh \rho)^{\alpha+n-1}d\rho\right)^{\frac{1}{q}}\left(\int^{\infty}_{|x|_{a}}(\sinh \rho)^{\beta(1-p')+n-1}d\rho\right)^{\frac{1}{p'}}\\&
\simeq \inf_{|x|_{a}\ll1}\left( |x|_{a}^{\beta(1-p')+n}+C'_{R}\right)^{\frac{1}{q}}|x|_{a}^{\frac{\alpha+n}{q}}\\&
\simeq \inf_{|x|_{a}\ll1}|x|_{a}^{\frac{\alpha+n}{q}},
\end{split}
\end{equation}
and infimum is positive, if only if $\frac{\alpha+n}{q}<0$, i.e., $\alpha+n>0$.
\end{proof}
\subsection{Cartan-Hadamard manifolds}
Let $(M,g)$ be the Cartan-Hadamard manifold with curvature $K_{M}$. If $K_{M}=0$  then $J(t,\omega)=1$ and we set
\begin{equation}
u(x)=|x|_{a}^{\alpha},\,\,\,\,\,v(x)=|x|_{a}^{\beta},\,\,\, \text{when}\,\,\,K_{M}=0.
\end{equation}
If $ K_{M}<0$ then $J(t,\omega)=\left(\frac{\sinh \sqrt{b}t}{\sqrt{b}t}\right)^{n-1}$ and we set
\begin{equation}
u(x)=(\sinh\sqrt{-K_{M}} |x|_{a})^{\alpha},\,\,\,\,\,v(x)=(\sinh\sqrt{-K_{M}}|x|_{a})^{\beta},\,\,\, \text{when}\,\,\,K_{M}<0.
\end{equation}
Then we have the following result of this subsection.
\begin{cor}
Assume that $(M,g)$ be the Cartan-Hadamard manifold of dimension $n$ and with curvature $K_{M}$. Assume that $q<0$, $p\in(0,1)$ and $\alpha,\beta\in\mathbb{R}$. Then we have
\begin{itemize}
\item[i)] if $K_{M}=0$, $u(x)=|x|_{a}^{\alpha},v(x)=|x|_{a}^{\beta}$, then
\begin{equation}
\left[\int_{M}\left(\int_{B(a,|x|_{a})}f(y)dy\right)^{q}|x|_{a}^{\alpha}dx\right]^{\frac{1}{q}}\geq C\left(\int_{M}f^{p}(x)|x|^{\beta}dx\right)^{\frac{1}{p}},
\end{equation}
holds for $C>0$ and for non-negative measurable functions $f,$ if only if $\alpha+n<0$, $\beta(1-p')+n>0$ and $\frac{n+\alpha}{q}+\frac{n+\beta(1-p')}{p'}=0$;
\item[ii)]if $K_{M}=0$, $u(x)=|x|_{a}^{\alpha},v(x)=|x|_{a}^{\beta}$, then
\begin{equation}
\left[\int_{M}\left(\int_{M\setminus B(a,|x|_{a})}f(y)dy\right)^{q}|x|^{\alpha}dx\right]^{\frac{1}{q}}\geq C\left(\int_{M}f^{p}(x)|x|^{\beta}dx\right)^{\frac{1}{p}},
\end{equation}
holds for $C>0$ and for non-negative measurable functions $f,$ if only if  $\alpha+n>0$, $\beta(1-p')+n<0$ and $\frac{n+\alpha}{q}+\frac{n+\beta(1-p')}{p'}=0$;

\item[iii)]if $K_{M}<0$, $u(x)=(\sinh\sqrt{-K_{M}} |x|_{a})^{\alpha},v(x)=(\sinh|x|_{a})^{\beta}$, then
\begin{multline}
\left[\int_{M}\left(\int_{B(a,|x|_{a})}f(y)dy\right)^{q}(\sinh\sqrt{-K_{M}} |x|_{a})^{\alpha}dx\right]^{\frac{1}{q}}\\
\geq C\left(\int_{M}f^{p}(x)(\sinh\sqrt{-K_{M}} |x|_{a})^{\beta}dx\right)^{\frac{1}{p}},
\end{multline}
holds for $C>0$ and for all non-negative measurable functions $f,$ if   $0\leq\alpha+n<1$, $\beta(1-p')+n>0$ and $\frac{\alpha+n}{q}+\frac{\beta(1-p')+n}{p'}\geq\frac{1}{q}+\frac{1}{p'}$;
\item[iv)]if $K_{M}<0$, $u(x)=(\sinh\sqrt{-K_{M}} |x|_{a})^{\alpha},v(x)=(\sinh\sqrt{-K_{M}} |x|_{a})^{\beta}$, then
\begin{multline}
\left[\int_{M}\left(\int_{M\setminus B(a,|x|_{a})}f(y)dy\right)^{q}(\sinh\sqrt{-K_{M}} |x|_{a})^{\alpha}dx\right]^{\frac{1}{q}}\\
\geq C\left(\int_{M}f^{p}(x)(\sinh\sqrt{-K_{M}} |x|_{a})^{\beta}dx\right)^{\frac{1}{p}},
\end{multline}
holds for $C>0$ and for all non-negative measurable functions $f,$ if  $\alpha+n>0$,  $1>\beta(1-p')+n\geq0$ and $\frac{\alpha+n}{q}+\frac{\beta(1-p')+n}{p'}\geq\frac{1}{q}+\frac{1}{p'}$.

\end{itemize}

\end{cor}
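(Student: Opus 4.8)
The plan is to apply the two main theorems—the reverse integral Hardy inequality \eqref{har1} and its conjugate \eqref{har*}—to the four concrete weighted settings on $(M,g)$, reducing each case to a verification of the relevant positivity condition ($D_1>0$ from \eqref{D1} or $D_2>0$ from \eqref{D2}) through the polar decomposition with density $\lambda(\rho,\omega)=J(\rho,\omega)\rho^{n-1}$. The whole argument rests on recognizing that the Cartan-Hadamard density reproduces, up to an irrelevant multiplicative constant, either the Euclidean weight $\rho^{n-1}$ (when $K_M=0$) or the hyperbolic weight $(\sinh\sqrt{-K_M}\rho)^{n-1}$ (when $K_M<0$), so that the required computations have effectively already been performed in the homogeneous-group and hyperbolic-space corollaries.

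First I would treat the flat cases (i) and (ii), where $K_M=0$ forces $J\equiv 1$ and hence $\lambda(\rho,\omega)=\rho^{n-1}$. This is exactly the density appearing in the homogeneous Lie group computations, with the homogeneous dimension $Q$ replaced by $n$ and the quasi-sphere area by $|\mathbb{S}^{n-1}|$. For (i) I would substitute $u(x)=|x|_a^{\alpha}$, $v(x)=|x|_a^{\beta}$ into $D_1$ and compute $\int_{M\setminus B(a,|x|_a)}|y|_a^{\alpha}\,dy=\frac{|\mathbb{S}^{n-1}|}{|\alpha+n|}|x|_a^{\alpha+n}$, finite exactly when $\alpha+n<0$, together with $\int_{B(a,|x|_a)}|y|_a^{\beta(1-p')}\,dy=\frac{|\mathbb{S}^{n-1}|}{\beta(1-p')+n}|x|_a^{\beta(1-p')+n}$, finite exactly when $\beta(1-p')+n>0$. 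Their product equals a positive constant times $|x|_a^{\frac{n+\alpha}{q}+\frac{n+\beta(1-p')}{p'}}$, whose infimum over $|x|_a>0$ is positive and finite if and only if the exponent vanishes, i.e.\ $\frac{n+\alpha}{q}+\frac{n+\beta(1-p')}{p'}=0$. Case (ii) is identical after interchanging the two integration regions and invoking the conjugate theorem, which yields the mirrored constraints.

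Next I would treat the negatively curved cases (iii) and (iv), where $K_M<0$ gives $J(\rho,\omega)=\big(\tfrac{\sinh\sqrt{-K_M}\rho}{\sqrt{-K_M}\rho}\big)^{n-1}$, so that $\lambda(\rho,\omega)=(-K_M)^{-\frac{n-1}{2}}(\sinh\sqrt{-K_M}\rho)^{n-1}$ is a constant multiple of $(\sinh\sqrt{-K_M}\rho)^{n-1}$. Since this multiplicative constant cancels in the ratio defining $D_1$ (resp.\ $D_2$), the verification reduces—after the rescaling $\rho\mapsto\sqrt{-K_M}\rho$—to exactly the hyperbolic-space integrals already analyzed, with $\sinh|x|_a$ replaced by $\sinh\sqrt{-K_M}|x|_a$. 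I would therefore reuse the hyperbolic-space argument verbatim: finiteness of $D_1$ requires $\alpha+n<1$ and $\beta(1-p')+n>0$, and the two-regime asymptotic analysis yields positivity of the infimum under $0\le\alpha+n<1$ and $\frac{\alpha+n}{q}+\frac{\beta(1-p')+n}{p'}\ge\frac1q+\frac1{p'}$; case (iv) follows the same way through $D_2$.

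The only genuinely non-routine step is the asymptotic control of the infimum in the curved cases: because $\sinh$ interpolates between the linear regime $\sinh t\approx t$ near the origin and the exponential regime $\sinh t\approx e^t$ at infinity, one must split $\inf_{x\neq a}$ into $|x|_a\ll1$ and $|x|_a\gg1$ and check positivity in each separately. This is precisely where the boundary constraints $0\le\alpha+n$ in (iii) and $1>\beta(1-p')+n\ge0$ in (iv) become necessary, guaranteeing that the locally divergent part of the radial integral does not destroy positivity of the infimum. As this splitting is carried out in full in the hyperbolic subsection, the remaining work is simply to record that the Cartan-Hadamard density matches the hyperbolic one up to the harmless factor $(-K_M)^{-\frac{n-1}{2}}$.
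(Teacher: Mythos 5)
Your proposal is correct and coincides with the paper's own (implicit) treatment: the paper states this corollary without a separate proof, precisely because, as you observe, the density $\lambda(\rho,\omega)=J(\rho,\omega)\rho^{n-1}$ reduces cases (i)--(ii) to the homogeneous-group computation with $Q=n$ and cases (iii)--(iv), up to the harmless constant $(-K_{M})^{-\frac{n-1}{2}}$ and the rescaling $\rho\mapsto\sqrt{-K_{M}}\,\rho$, to the hyperbolic-space analysis of $D_{1}$ and $D_{2}$.
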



\begin{thebibliography}{H8}
\bibitem{AF}
R.~A.~Adams and J. J. F. Fournier. Sobolev spaces. Vol. 140. Elsevier, 2003.
\bibitem{BH}
P. R. Beesack and H. P. Heinig. Hardy's inequalities with indices less than
1. {\em Proc. Amer. Math. Soc.,} 83(3):532--536, 1981.
\bibitem{Dav99}
E.~B. Davies.
\newblock A review of {H}ardy inequalities.
\newblock In {\em The {M}az'ya anniversary collection, {V}ol. 2 ({R}ostock,
  1998)}, volume 110 of {\em Oper. Theory Adv. Appl.}, pages 55--67.
  Birkh\"auser, Basel, 1999.
\bibitem{DHK}
P. Dr\'{a}bek, H. P. Heinig and A. Kufner. {\em Higher-dimensional Hardy inequality, in: General Inequalities.} 7 (Oberwolfach 1995),
Internat. Ser. Numer. Math. 123, Birkhauser, Basel, 3--16, 1997.
\bibitem{EE04}
D.~E. Edmunds and W.~D. Evans.
\newblock {\em Hardy operators, function spaces and embeddings}.
\newblock Springer Monographs in Mathematics. Springer-Verlag, Berlin, 2004.

\bibitem{FR}
V.~Fischer and M.~Ruzhansky.
\newblock {\em Quantization on nilpotent {L}ie groups}, volume 314 of {\em
  Progress in Mathematics}.
\newblock Birkh\"auser. (open access book), 2016.

\bibitem{FS1}
G.~B. Folland and E.~M. Stein.
\newblock {\em Hardy spaces on homogeneous groups}, volume~28 of {\em
  Mathematical Notes}.
\newblock Princeton University Press, Princeton, N.J.; University of Tokyo
  Press, Tokyo, 1982.

\bibitem{DV1}
S.~Gallot, D.~Hulin, and J.~Lafontaine.
\newblock {\em Riemannian geometry}.
\newblock Universitext. Springer-Verlag, Berlin, third edition, 2004.

\bibitem{GKPW04}
A.~Gogatishvili, A.~Kufner, L.-E. Persson, and A.~Wedestig.
\newblock An equivalence theorem for integral conditions related to {H}ardy's
  inequality.
\newblock {\em Real Anal. Exchange}, 29(2):867--880, 2003/04.
\bibitem{GKK}
Gogatishvili, A., K. Kuliev, and G. Kulieva. Some conditions characterizing the "reverse" Hardy inequality. \newblock {\em Real Anal. Exchange}, 33(1): 249-258, 2008.
\bibitem{GM13}
N.~Ghoussoub and A.~Moradifam.
\newblock {\em Functional inequalities: new perspectives and new applications},
  volume 187 of {\em Mathematical Surveys and Monographs}.
\newblock American Mathematical Society, Providence, RI, 2013.


\bibitem{Har20}
G.~H. Hardy.
\newblock Note on a theorem of {H}ilbert.
\newblock {\em Math. Z.}, 6(3-4):314--317, 1920.

\bibitem{DV3}
S.~Helgason.
\newblock {\em Differential geometry, {L}ie groups, and symmetric spaces},
  volume~34 of {\em Graduate Studies in Mathematics}.
\newblock American Mathematical Society, Providence, RI, 2001.
\newblock Corrected reprint of the 1978 original.
\bibitem{KKK08}
A.~Kufner, K.~Kuliev and G.~Kulieva.
\newblock  {The Hardy inequality with one negative parameter.}
{\em Banach J. Math. Anal.,}2(2):76--84, 2008.
\bibitem{KP03}
A.~Kufner and L.-E. Persson.
\newblock {\em Weighted inequalities of {H}ardy type}.
\newblock World Scientific Publishing Co., Inc., River Edge, NJ, 2003.

\bibitem{KPS17}
A.~Kufner, L.-E. Persson, and N.~Samko.
\newblock {\em Weighted inequalities of {H}ardy type}.
\newblock World Scientific Publishing Co. Pte. Ltd., Hackensack, NJ, second
  edition, 2017.
  \bibitem{KK}
 A. Kufner and K. Kuliev, The Hardy inequality with "negative powers".
{\em Adv. Alg. Anal.,} 1(3): 219--228, 2006.

\bibitem{Maz85}
V.~G. Maz'ja.
\newblock {\em Sobolev spaces}.
\newblock Springer Series in Soviet Mathematics. Springer-Verlag, Berlin, 1985.
\newblock Translated from the Russian by T. O. Shaposhnikova.

\bibitem{Maz11}
V.~Maz'ya.
\newblock {\em Sobolev spaces with applications to elliptic partial
  differential equations}, volume 342 of {\em Grundlehren der Mathematischen
  Wissenschaften [Fundamental Principles of Mathematical Sciences]}.
\newblock Springer, Heidelberg, augmented edition, 2011.


\bibitem{OK90}
B.~Opic and A.~Kufner.
\newblock {\em Hardy-type inequalities}, volume 219 of {\em Pitman Research
  Notes in Mathematics Series}.
\newblock Longman Scientific \& Technical, Harlow, 1990.

\bibitem{Pro}
D. V. Prokhorov, Weighted Hardy's inequalities for negative indices.  {\em Publ.
Mat.,} 48 :423--443, 2004.

\bibitem{RV}
M.~Ruzhansky and D.~Verma. Hardy inequalities on metric measure spaces.
{\em Proc. R. Soc. A.,} 475(2223):20180310, 2019.


\bibitem{RS17}
M.~Ruzhansky and D.~Suragan.
\newblock Hardy and {R}ellich inequalities, identities, and sharp remainders on
  homogeneous groups.
\newblock {\em Adv. Math.}, 317:799--822, 2017.

\bibitem{RS18}
M.~Ruzhansky and D.~Suragan.
\newblock {\em Hardy inequalities on homogeneous groups}.
Progress in Math. Vol. 327, Birkh\"{a}user, 588 pp, 2019, (open access book).

\bibitem{RSY18}
M.~Ruzhansky, D.~Suragan, and N.~Yessirkegenov.
\newblock Extended {C}affarelli-{K}ohn-{N}irenberg inequalities, and
  remainders, stability, and superweights for {$L^p$}-weighted {H}ardy
  inequalities.
\newblock {\em Trans. Amer. Math. Soc. Ser. B}, 5:32--62, 2018.

\bibitem{RY18a}
M.~Ruzhansky and N.~Yessirkegenov.
\newblock {H}ardy, weighted {T}rudinger-{M}oser and
  {C}affarelli-{K}ohn-{N}irenberg type inequalities on {R}iemannian manifolds
  with negative curvature.
\newblock {\em arXiv:1802.09072}, 2018.

\bibitem{RY18b}
M.~Ruzhansky and N.~Yessirkegenov.
\newblock Hypoelliptic functional inequalities.
\newblock {\em https://arxiv.org/abs/1805.01064}, 2018.
\bibitem{RYriem}
M.~Ruzhansky and N.~Yessirkegenov. Hardy, Hardy-Sobolev and Caffarelli-Kohn-Nirenberg inequalities on general Lie groups.
{\em arxiv:1810.08845v2}, 2019.
\end{thebibliography}
\end{document}